\theoremstyle{plain}
\newtheorem{theorem}{Theorem}[section]
\newtheorem*{recalltheorem}{Theorem}
\newtheorem{lemma}[theorem]{Lemma}
\newtheorem{corollary}[theorem]{Corollary}
\theoremstyle{definition}
\theoremstyle{remark}
\renewcommand{\a}{\mathfrak{a}}
\renewcommand{\b}{\mathfrak{b}}
\newcommand{\m}{\mathfrak{m}}
\newcommand{\n}{\mathfrak{n}}
\newcommand{\G}{\Gamma}
\newcommand{\I}{\mathbf{I}}
\newcommand{\pideal}{\vartriangleleft}
\begin{document}
\title[intersection graph]{On cycles in intersection graphs of rings}
\author{A. Azimi}
\author{A. Erfanian}
\author{M. Farrokhi D. G.}
\author{N. Hoseini}
\subjclass[2000]{Primary 05C25, 05C45; Secondary 16P20.}
\keywords{Intersection graph, cycle, claw, Hamiltonian, pancyclic}
\address{Department of Pure Mathematics, Ferdowsi University of Mashhad, Mashhad, Iran}
\email{ali.azimi61@gmail.com}
\address{Department of Pure Mathematics, Ferdowsi University of Mashhad, Mashhad, Iran}
\email{erfanian@math.um.ac.ir}
\address{Department of Pure Mathematics, Ferdowsi University of Mashhad, Mashhad, Iran}
\email{m.farrokhi.d.g@gmail.com}
\address{Department of Pure Mathematics, Ferdowsi University of Mashhad, Mashhad, Iran}
\email{nesa.hoseini@gmail.com}
\begin{abstract}
Let $R$ be a commutative ring with non-zero identity. We describe all $C_3$- and $C_4$-free intersection graph of non-trivial ideals of $R$ as well as $C_n$-free intersection graph when $R$ is a reduced ring. Also, we shall describe all complete, regular and $n$-claw-free intersection graphs. Finally, we shall prove that almost all Artin rings $R$ have Hamiltonian intersection graphs. We show that such graphs are indeed pancyclic.
\end{abstract}
\maketitle
\section{Introduction}
If $S=\{S_1,\ldots,S_n\}$ is a family of sets, then the intersection graph of $S$, is the graph having $S$ as its vertex set with $S_i$ adjacent to $S_j$ if $i\neq j$ and $S_i\cap S_j\neq\emptyset$. A well-know theorem due to Marczewski \cite{tam-frm} states that all graphs are intersection graph.

An interesting case of intersection graphs is when the members of $S$ have an algebraic structure. Bosak \cite{jb} was the first who studied graphs arising from semigroups. Cs\'{a}k\'{e}any and Poll\'{a}k \cite{bc-gp} defined and studied the intersection graphs of nontrivial proper subgroups of groups. Zelinka \cite{bz} continued the work of Cs\'{a}k\'{e}any and Poll\'{a}k on intersection graphs of subgroups of finite abelian groups, and later Shen \cite{rs} studies such graphs and classifies all finite groups whose intersection graphs of nontrivial subgroups are disconnected. Herzog, Longobardi and Maj \cite{mh-pl-mm} study the intersection graphs of maximal subgroups of finite groups and among other results classify all finite groups with disconnected graph. The same as for groups, the intersection graphs of ideals of rings and subspaces of vector spaces have been discussed in \cite{ic-sg-tkm-mks,shj-njr-1,shj-njr-2}.

Let $R$ be a commutative ring with a non-zero identity. The intersection graph of $R$, denoted by $\G(R)$, is a graph whose vertices are the nontrivial ideals of $R$ and two distinct vertices are joined by an edge if the corresponding ideals of $R$ have a non-zero intersection.

In this paper, we study the cycle structure of intersection graphs. First we classify all Artin rings with a regular (hence complete) intersection graph. Next we shall investigate all rings $R$ whose intersection graphs $\G(R)$ do not have an induced cycle of  length $3$ or $4$. Also, we show that if $R$ is a reduce ring, then $\G(R)$ is $C_n$-free $(n\geq5)$ if and only if $R$ has no ideal which is the direct sum of $n$ non-zero ideals. The same result is also established for $n$-claws instead of $n$-cycles. In the last section, we shall prove that except few cases all other Artin rings have Hamiltonian intersection graphs. Using simple modifications of the given Hamiltonian cycle, we show that $\G(R)$ is pancyclic whenever it is Hamiltonian. Recall that an $n$-claw (a claw) is the star graph $K_{1,n}$ ($K_{1,3}$). Also a graph is called pancyclic if it contains cycles of possible arbitrary sizes $\geq3$.

The following theorem will be used without further reference.
\begin{recalltheorem}[{\cite[Theorem VI.2]{brm}}]
Let $R$ be an Artin commutative ring with a non-zero identity. Then 
\[R=R_1\oplus\cdots\oplus R_n,\]
where $R_1,\ldots,R_n$ are local rings.
\end{recalltheorem}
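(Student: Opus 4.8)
The plan is to derive the decomposition from two structural features of an Artinian commutative ring $R$: that it has only finitely many maximal ideals, and that its Jacobson radical is nilpotent. First I would show that $R$ has only finitely many maximal ideals $\m_1,\ldots,\m_n$. If there were infinitely many distinct ones, the descending chain $\m_1\supseteq\m_1\cap\m_2\supseteq\m_1\cap\m_2\cap\m_3\supseteq\cdots$ would stabilize by the descending chain condition, forcing $\m_1\cap\cdots\cap\m_r\subseteq\m_{r+1}$ for some $r$. Since $\m_{r+1}$ is prime and contains the product $\m_1\cdots\m_r$, it would contain some $\m_i$ with $i\le r$, whence $\m_i=\m_{r+1}$ by maximality, a contradiction.

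Next I would prove that the Jacobson radical $\a=\m_1\cap\cdots\cap\m_n$ is nilpotent. The chain $\a\supseteq\a^2\supseteq\cdots$ stabilizes by the descending chain condition to an ideal $\b$ satisfying $\b=\b^2$, and the goal is to show $\b=0$. Supposing $\b\neq0$, I would pass to a minimal element of the (nonempty) collection of ideals $\c$ with $\b\c\neq0$; a standard reduction shows this minimal ideal may be taken principal, say equal to $(x)$, and then $\b(x)=(x)$ yields an element $y\in\b\subseteq\a$ with $x=yx$. Since $y$ lies in the Jacobson radical, $1-y$ is a unit, so $x=0$, contradicting $(x)\neq0$. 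Hence $\b=0$, i.e.\ $\a^k=0$ for some $k$.

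With these two facts the decomposition follows from the Chinese Remainder Theorem. Because the $\m_i$ are pairwise comaximal, so are the powers $\m_1^k,\ldots,\m_n^k$, and their product equals their intersection, which is $(\m_1\cdots\m_n)^k=\a^k=0$. Thus CRT gives a ring isomorphism $R\cong R/\m_1^k\times\cdots\times R/\m_n^k$. Each factor $R/\m_i^k$ is local, since a maximal ideal of $R/\m_i^k$ corresponds to a maximal ideal of $R$ containing $\m_i^k$, and any prime containing $\m_i^k$ must contain $\m_i$, leaving $\m_i$ as the only possibility. Rewriting the direct product as an internal direct sum of ideals then yields the stated form $R=R_1\oplus\cdots\oplus R_n$.

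I expect the nilpotence of the Jacobson radical to be the main obstacle, as it is the one step that uses the descending chain condition in an essential, non-formal way through the minimal-ideal argument; by contrast, the finiteness of the maximal spectrum and the final assembly via the Chinese Remainder Theorem are comparatively routine once nilpotence is in hand.
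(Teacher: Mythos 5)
Your proposal is correct, but there is nothing in the paper to compare it against: the paper does not prove this statement at all, it simply recalls it as background and cites it to \cite[Theorem VI.2]{brm}. What you have written is the classical textbook proof of the structure theorem for commutative Artinian rings, and every step checks out: the stabilizing chain $\m_1\supseteq\m_1\cap\m_2\supseteq\cdots$ argument correctly yields finiteness of the maximal spectrum (a prime containing the product $\m_1\cdots\m_r$ must contain some $\m_i$, forcing a coincidence of maximal ideals); the minimal-ideal argument for nilpotence of the Jacobson radical is the standard one, and invoking that $1-y$ is a unit for $y$ in the radical closes it properly; and the Chinese Remainder Theorem step is valid because comaximality of $\m_i$ and $\m_j$ passes to their powers, so that $0=\a^k=\m_1^k\cdots\m_n^k=\m_1^k\cap\cdots\cap\m_n^k$, giving $R\cong R/\m_1^k\times\cdots\times R/\m_n^k$ with each factor local (any maximal ideal containing $\m_i^k$ contains $\m_i$, hence equals it). In effect you have reconstructed the proof that the cited source (and standard references such as Atiyah--Macdonald) supplies, rather than diverged from anything in the paper itself; your closing remark is also apt, since the nilpotence of the radical is indeed the only step where the descending chain condition is used in an essential way beyond formal chain-stabilization.
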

If $R$ is a ring, then the ideals $\a_1,\ldots,\a_n$ are called \textit{independent} if \[\a_i\cap(\a_1+\cdots+\a_{i-1}+\a_{i+1}+\cdots+\a_n)=0\]
for $i=1,\ldots,n$. In other words, $(\a_1,\ldots,\a_n)=\a_1\oplus\cdots\oplus\a_n$ is the direct sum of $\a_1,\ldots,\a_n$. All rings in this paper are commutative rings with a non-zero identity.
\section{$C_n$-free intersection graphs}
As a most simple property we may investigate on intersection graph $\G(R)$ of ideals of a ring, is whether $\G(R)$ is a complete graph. We show that the class of Artin rings with a complete intersection graph coincides with the class of Artin rings with a regular intersection graph and then characterize all such rings.
\begin{theorem}
Let $R$ be an Artin ring, which is not a direct sum of two fields. If $\G(R)$ is regular, then it is complete.
\end{theorem}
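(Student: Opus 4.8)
The plan is to argue by contradiction: assuming $\G(R)$ is regular, not complete, and $R$ is not a direct sum of two fields, I will exhibit two vertices of different degree. Write $R=R_1\oplus\cdots\oplus R_n$ with each $R_i$ local Artinian having maximal ideal $\m_i$, and let $N$ denote the (finite) number of nontrivial ideals, i.e. the number of vertices. Since $\G(R)$ is not complete there are two non-adjacent vertices, so the common degree satisfies $d\le N-2$. (Note that non-completeness is equivalent to $R$ having two distinct minimal ideals, whose intersection is $0$.) The whole strategy is to exhibit a vertex of degree $N-1$, adjacent to everything, unless $R$ is a product of fields, and then to dispose of products of fields by a direct count.

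The key step concerns the maximal ideals $M_i=R_1\oplus\cdots\oplus\m_i\oplus\cdots\oplus R_n$. I claim that if $R_i$ is not a field, then $M_i$ is adjacent to every other vertex, so $\deg M_i=N-1$. To see this, suppose $J$ is a nonzero ideal with $J\cap M_i=0$. Let $e_i$ be the identity of the factor $R_i$, viewed as an idempotent of $R$. Every coordinate other than the $i$-th lies entirely inside $M_i$, so $(1-e_i)J\subseteq J\cap M_i=0$; hence $J$ is supported in the $i$-th coordinate, say $J=0\oplus\cdots\oplus J_i\oplus\cdots\oplus 0$ for a nonzero ideal $J_i\ideal R_i$. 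Since $R_i$ is local and not a field, $J_i\cap\m_i\neq0$ (either $J_i\subseteq\m_i$, or $J_i=R_i$ and then $J_i\cap\m_i=\m_i\neq0$), forcing $J\cap M_i\neq0$, a contradiction. Thus $\deg M_i=N-1>d$, contradicting regularity. This rules out every non-field factor; it simultaneously disposes of the local case $n=1$, where the unique maximal ideal $\m$ contains every ideal and hence already has degree $N-1$. Therefore all $R_i$ are fields and $n\ge 2$.

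It remains to treat $R=R_1\oplus\cdots\oplus R_n$ with all $R_i$ fields and $n\ge2$. Here the nontrivial ideals are exactly the sums $\bigoplus_{i\in S}R_i$ over nonempty proper subsets $S\subseteq\{1,\dots,n\}$, and two such ideals are adjacent precisely when the corresponding subsets meet. A singleton $\{i\}$ is then adjacent to every subset containing $i$ except itself and the full set, giving degree $2^{n-1}-2$, whereas a complement $\{1,\dots,n\}\setminus\{j\}$ is adjacent to all nonempty proper subsets except itself and the singleton $\{j\}$, giving degree $2^{n}-4$. Regularity forces $2^{n-1}-2=2^{n}-4$, i.e. $n=2$, so $R$ is a direct sum of two fields, the excluded case. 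Hence no counterexample exists and the theorem follows. The main obstacle is the adjacency lemma for $M_i$: arranging the idempotent decomposition so that a hypothetical non-neighbour of $M_i$ is confined to a single local factor, where the failure of the field property can be detected.
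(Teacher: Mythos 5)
Your proof is correct. Both arguments ultimately rest on the same fact --- a maximal ideal lying over a non-field local factor meets every nonzero ideal, so it is adjacent to all other vertices, and a regular graph with such a dominating vertex must be complete --- but you and the paper deploy this fact in opposite directions and dispose of the field factors by genuinely different counts. The paper first rules out any field direct factor: writing $R=S\oplus F$ with $F$ a field and $S$ not a field (which also covers products of three or more fields, since then $S$ is a product of at least two fields), it compares $N_{\G(R)}(F)=\{\a\oplus F: 0\neq\a\pideal S\}$ with $N_{\G(R)}(S)=\{\a,\ \a\oplus F: 0\neq\a\pideal S\}$, so that $\deg_{\G(R)}S=2\deg_{\G(R)}F>\deg_{\G(R)}F$, contradicting regularity; only then does it invoke the dominating maximal ideals (asserted without proof) to conclude completeness. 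You run the logic in reverse: you first prove the dominating-vertex lemma carefully --- the idempotent argument $(1-e_i)J\subseteq J\cap M_i=0$, confining a hypothetical non-neighbour to a single local factor, is exactly the detail the paper leaves implicit --- and use it to reduce to the case where every local factor is a field, then finish with the explicit Boolean-lattice count comparing a singleton ($2^{n-1}-2$) with a co-singleton ($2^n-4$), forcing $n=2$, the excluded case. The paper's route is shorter, since one two-line neighborhood comparison handles every ring with a field factor at once; your route is more self-contained, supplying a full proof of the key adjacency claim and a concrete, checkable computation in products of fields.
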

\begin{proof}
First we show that $R$ has no direct factor, which is a field. If $R=S\oplus F$, where $F$ is a field and $S$ is not a field, then $N_{\G(R)}(F)=\{\a\oplus F:0\neq\a\pideal S\}$ and $N_{\G(R)}(S)=\{\a,\a\oplus F:0\neq\a\pideal S\}$. Hence, $\deg_{\G(R)}S>\deg_{\G(R)}F$, which is a contradiction. Therefore, each maximal ideal of $R$ is adjacent to all other vertices of $\G(R)$, from which it follows that $\G(R)$ is a complete graph.
\end{proof}
\begin{theorem}
If $R$ is an Artin ring, then the graph $\G(R)$ is complete if and only if there exists a sequence of rings $R_1,\ldots,R_n$, in which $R=R_1$, $(R_i,R_{i+1})$ is a local ring for all $i=1,\ldots,n-1$ and $R_n$ is a field.
\end{theorem}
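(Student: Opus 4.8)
The plan is to first reduce the completeness of $\G(R)$ to a purely ideal-theoretic condition and only afterwards translate that condition into the recursive form appearing in the statement. The key observation is that $\G(R)$ is complete if and only if any two non-zero ideals of $R$ meet non-trivially, which in turn is equivalent to $R$ possessing a \emph{unique} minimal ideal. Indeed, two distinct minimal ideals $\a,\b$ always satisfy $\a\cap\b=0$ and hence yield a non-edge; conversely, since $R$ is Artinian every non-zero ideal contains a minimal ideal (by the descending chain condition), so if the minimal ideal $S$ is unique then $S$ is contained in every non-zero ideal and each pairwise intersection contains $S\neq0$. The degenerate case in which $R$ is a field gives the empty graph, which is vacuously complete and corresponds to the sequence of length $n=1$.

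Next I would feed in the structure theorem quoted above, writing $R=R_1\oplus\cdots\oplus R_m$ with each factor local. The minimal ideals of $R$ are exactly the minimal ideals of the individual factors, embedded as $0\oplus\cdots\oplus S\oplus\cdots\oplus0$, and each non-zero Artinian factor has at least one. Hence a unique minimal ideal forces $m=1$, so completeness already pins down $R$ as a local ring $(R,\m)$. For such $R$ the unique-minimal-ideal condition says precisely that the socle $(0:\m)$ is a \emph{simple} $R/\m$-module, i.e.\ $\dim_{R/\m}(0:\m)=1$; for if the socle had dimension $\geq2$ it would contain two distinct minimal ideals meeting in $0$.

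Finally I would set up an induction on the length $\dim_{R/\m}R$ to pass between the simple-socle condition and the recursive sequence $R_1,\ldots,R_n$. At each stage the simple socle of $R_i$ singles out the next ring $R_{i+1}$ for which the pair $(R_i,R_{i+1})$ is a local ring in the intended sense, one checks that $R_{i+1}$ again lies in the same class but has strictly smaller length, so the process terminates at the field $R_n$; conversely, unwinding such a chain rebuilds a local ring whose socle is forced to be simple, hence whose graph is complete by the first paragraph.

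The main obstacle I expect is exactly this last translation. One must verify that the class of local rings cut out by the simple-socle (equivalently unique-minimal-ideal) condition is closed under the one-step reduction defining $(R_i,R_{i+1})$, that this reduction strictly decreases the length, and that it is reversible, so the recursive description is neither too large (inadvertently admitting local rings with a non-simple socle) nor too small (excluding some ring with a unique minimal ideal). The graph-theoretic input is essentially immediate once the unique-minimal-ideal reformulation is in hand; all the genuine work is the ring-theoretic bookkeeping of the socle along the chain down to the terminal field.
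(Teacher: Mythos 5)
Your first three reductions are correct and are in fact more careful than anything in the paper: completeness of $\G(R)$ is equivalent to any two non-zero ideals meeting non-trivially, hence (by the descending chain condition) to $R$ having a unique minimal ideal; the structure theorem then forces $R$ to be local; and uniqueness of the minimal ideal is exactly simplicity of the socle $(0:\m)$. The genuine gap is the step you yourself flagged as the ``main obstacle,'' and it is not mere bookkeeping: that step cannot be carried out, because the simple-socle condition is \emph{not} equivalent to the recursive chain condition in the statement. Take $R=k[x,y]/(x^2,y^2)$. Its socle is the one-dimensional ideal $(xy)$, so every non-zero ideal contains $xy$ and $\G(R)$ is complete; but the maximal ideal $\m=(x,y)$ has many maximal sub-ideals, namely the two-dimensional subspaces $\langle v,xy\rangle$ with $0\neq v\in\langle x,y\rangle$, so there is no unique candidate for $R_3$ and no sequence $R_1,\ldots,R_n$ of the required kind exists under any reading in which ``$(R_i,R_{i+1})$ is a local ring'' means that $R_{i+1}$ is the unique maximal sub-ideal of $R_i$. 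In other words, your reduction does not just leave the last step open---pushed to its conclusion, it produces a counterexample to the ``only if'' direction of the statement as written.

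For comparison, the paper's own proof is two lines: if $R$ is not local it splits as $S\oplus T$ with $S\cap T=0$, so $R$ is local, and then one ``continues this way for $\m$ instead of $R$.'' That continuation commits precisely the error your obstacle isolates: the dichotomy ``local or a direct sum of two non-zero ideals'' comes from the structure theorem for Artin rings \emph{with identity}, and it fails for $\m$ viewed as a non-unital ring; in the example above $\m$ is indecomposable (every non-zero sub-ideal contains $xy$) yet not local. So your approach is sound as far as it goes and identifies the true invariant; what it shows is that the statement should read that $\G(R)$ is complete if and only if $R$ has a unique minimal ideal (equivalently, $R$ is a field or a local ring with simple socle), a strictly weaker condition than the chain-like recursion claimed, and no amount of socle bookkeeping along your proposed induction can close that gap.
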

\begin{proof}
If $R$ is not a local ring, then $R=S\oplus T$ for some non-zero rings $S$ and $T$. But then $S\cap T=0$, which is a contradiction. Thus $R=(R,\m)$ is a local ring. Continuing this way for $\m$ instead of $R$ the result follows. The converse is obvious.
\end{proof}

In the following two theorems, we shall consider conditions under which the intersection graph of a ring is a star graph, which also results in a characterization of rings with a bipartite intersection graph.
\begin{theorem}\label{pendant}
Let $R$ be a ring, which is neither a direct sum of two fields nor a direct sum of a field with a local ring $(S,\m)$ such that $\m$ is a field. If $\G(R)$ has a pendant, then $\G(R)$ is a star graph.
\end{theorem}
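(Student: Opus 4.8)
The plan is to begin with a pendant $\a$ and its unique neighbour $\b$, and to extract everything from one elementary remark: \emph{every nonzero proper ideal $\c$ with $\c\neq\a$ and $\c\cap\a\neq0$ must equal $\b$}. Feeding principal ideals $(x)\subseteq\a$ and arbitrary ideals into this remark, I would first show that the only nonzero ideals contained in $\a$ lie in $\{\a,\b\}$, and then split according to whether $\b\subseteq\a$ or not. If $\b\not\subseteq\a$, then $\a$ has no nonzero proper subideal, so $\a$ is minimal, and $\a\cap\b\neq0$ forces $\a\subsetneq\b$; if $\b\subseteq\a$, then $\b$ is the unique nonzero proper subideal of $\a$.

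In the case $\b\subsetneq\a$ I would argue that no proper ideal can strictly contain $\a$ (such an ideal would be a neighbour of $\a$ other than $\b$), so $\a$ is maximal and the ideals contained in $\a$ are exactly $0,\b,\a$. If $R$ is local this already lists all nontrivial ideals, so $\G(R)=K_{1,1}$ is a star. If $R$ is not local, I would pick a second maximal ideal $\m'\neq\a$; since $\a\cap\m'\in\{0,\b\}$ and the value $\b$ would make $\m'$ a second neighbour of $\a$, we get $\a\cap\m'=0$, while maximality gives $\a+\m'=R$, hence an idempotent and a decomposition $R=\a\oplus\m'$. Reading the ideal structure off this decomposition exhibits $\a$ as a local ring whose maximal ideal $\b$ is its unique nontrivial ideal, and counting the neighbours of $\a$ forces $\m'$ to be a field; thus $R$ lands in one of the two excluded families, a contradiction.

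In the case $\a\subsetneq\b$ with $\a$ minimal, the neighbours of $\a$ are precisely the proper ideals strictly containing $\a$, so $\b$ is the unique such ideal; in particular $\b=\m$ is maximal and nothing lies strictly between $\a$ and $\m$. I would again rule out a second maximal ideal $\m'$: minimality of $\a$ gives $\a\cap\m'=0$ and comaximality gives $R=\a\oplus\m'$ with $\a$ now a field, after which counting the ideals above $\a$ forces $\m'$ to be local with a unique, minimal, nontrivial ideal, i.e. an excluded direct sum. Hence $R$ is local with maximal ideal $\m$. In a local ring $\m$ meets every nontrivial ideal, so $\m$ is a universal vertex; it remains to see there are no other edges. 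For any nonzero proper $\c\neq\a,\m$ I would show $\c\cap\a=0$ (otherwise $\c$ is a second neighbour of $\a$), whence $\a+\c$ is a proper ideal strictly above $\a$, so $\a+\c=\m$ and $\c\cong\m/\a$ is simple; thus every nontrivial ideal other than $\m$ is minimal, and distinct minimal ideals meet in $0$. Therefore the only edges are those incident to $\m$, and $\G(R)=K_{1,\kappa}$ is a star.

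The main obstacle is that $R$ is an arbitrary commutative ring, with no finiteness or locality assumption, so the ideal lattice cannot be governed by the structure theorem for Artin rings. The device that makes the argument go through is the passage from a hypothetical second maximal ideal to an honest decomposition $R=\a\oplus\m'$ through an idempotent — available precisely because minimality of $\a$ forces $\a\cap\m'=0$ while maximality forces $\a+\m'=R$ — which is what funnels every non-local configuration into the two excluded families. The remaining care is to use the pendant condition to classify \emph{all} nontrivial ideals, not merely those incident to $\a$.
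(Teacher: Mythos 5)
Your proposal is correct, but it travels a genuinely different road than the paper's. The paper argues with elements: it writes the pendant as a principal ideal $(x)$, ties its unique neighbour to powers of $x$ (e.g.\ $\b=(x^2)$ and $(x^3)=0$ when $\a$ is maximal), and in the core local case proves $\m^2=0$, so that $\m$ becomes a vector space over $R/\m$ of dimension $2$ whose subspace lattice yields the star. You never touch elements or nilpotency: you work entirely in the ideal lattice, splitting on whether the neighbour $\b$ contains or is contained in the pendant $\a$ (which recovers the paper's maximal/non-maximal dichotomy), eliminating non-local rings via the idempotent decomposition $R=\a\oplus\m'$ plus a neighbour count that funnels $R$ into the excluded families, and, in the local case, showing that every nontrivial ideal $\mathfrak{c}\neq\m$ satisfies $\m=\a\oplus\mathfrak{c}$, hence is simple, hence distinct ones meet in $0$ --- giving the star with centre $\m$ in one stroke, with no principal/non-principal case split. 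What the paper's route buys is finer structural information ($\m^2=0$, $\dim_{R/\m}\m\leq2$), essentially a classification of the rings themselves; what yours buys is uniformity and rigor precisely at the spots the paper waves through (``it is easy to see that $R=(R,\a)$ is a local ring'', ``clearly $(x^2)=0$'' --- both really need your idempotent/decomposition argument or a Brauer-type lemma on minimal ideals). Two small remarks: both proofs must read the excluded hypothesis ``local ring $(S,\m)$ with $\m$ a field'' loosely, as ``$S$ has a unique nontrivial ideal'' (taken literally no such local ring exists, since an identity of $\m$ would be a nontrivial idempotent of $S$); and in your case $\b\subsetneq\a$ with $R$ non-local, the detour through the excluded family is not needed --- $\b$ and $\b\oplus\m'$ are already two distinct neighbours of $\a$, contradicting pendancy outright --- though your longer route is still logically valid.
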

\begin{proof}
Let $\a\in V(\G(R))$ be a pendant. If $\a$ is a maximal ideal, then it is easy to see that $R=(R,\a)$ is a local ring and $\a=(x)$ is a principal ideal. Let $\b$ be the ideal of $R$ adjacent to $\a$. Then $\b=(x^2)$ and $(x^3)=0$, hence $\G(R)$ is an edge.

If $\a$ is not a maximal ideal, then there exists a unique maximal ideal $\m$ of $R$ containing $\a$. Clearly, $\a=(x)$ is principal and $(x^2)=0$. If $R$ is not a local ring, then there exists a maximal ideal $\n$ such that $\a\cap\n=0$. Thus $R=\a\oplus\n$ and $\a$ is a field. Then $\m=\a+\b$ for some ideal $\b$ of $\n$. But then $(\n,\b)$ is a local ring such that $\b$ is a field, which is a contradiction. Therefore $(R,\m)$ is a local ring. Clearly, $\m=(x,y)$ for some $y\in R$. If $\m$ is principal, then we may assume that $\m=(y)$. Thus $(y^2)=(x)$ and $(y^3)=0$, which implies that $\G(R)$ is an edge. If $\m$ is not principal, then $(x)\cap(y)=0$ and consequently $xy=0$. Since $(x)\subseteq (x)+(y^2)\subseteq (x)+(y)=\m$, it follows that $(y^2)=0$. Hence $\m^2=0$ so that $\m$ is a vector space over the field $F=R/\m$, where the multiplication is defined by $(r+\m)\cdot m=rm$ for all $r\in R$ and $m\in\m$. Clearly, there is a one to one correspondence between ideals of $R$ contained in $\m$ and subspaces of $(\m,F)$. Hence $\dim_F\m=2$ so that $\G(R)$ is a star graph.
\end{proof}
\begin{theorem}\label{triangle}
\label{main}
 If $\G(R)$ is triangle-free, then $\G(R)$ is star or two isolated vertices.
\end{theorem}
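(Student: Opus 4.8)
The plan is to use triangle-freeness to recognise $\G(R)$ as the comparability graph of a poset of height at most two, and then to let the ring structure force that poset to have a single maximal element. I would split according to whether $\G(R)$ has an edge.

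If $\G(R)$ has no edge, then any two nontrivial ideals meet in $0$. Given two such ideals $\a$ and $\b$, the ideal $\a\oplus\b$ properly contains $\a$, so if it were proper it would be adjacent to $\a$; hence $\a\oplus\b=R$. Thus each of $\a,\b$ is a direct summand of $R$ and therefore a ring with identity, and since any nontrivial ideal of $\a$ would be adjacent to $\a$ in $\G(R)$, both $\a$ and $\b$ are fields. So $R=F_1\oplus F_2$, whose only nontrivial ideals are the two summands, and $\G(R)$ is two isolated vertices. I would treat the degenerate cases of at most one nontrivial ideal separately.

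The content lies in the case where $\G(R)$ has an edge. The first and decisive step is that triangle-freeness makes adjacency coincide with comparability: if $\a\cap\b\neq0$ but $\a,\b$ are incomparable, then $\a\cap\b$ is a nontrivial ideal lying strictly below each of them, and the three ideals $\a$, $\b$, $\a\cap\b$ form a triangle. Hence $\G(R)$ is exactly the comparability graph of the poset of nontrivial ideals, and since a chain of three nontrivial ideals is itself a triangle, this poset has no chain of length three. In particular any non-minimal nontrivial ideal must be a maximal ideal of $R$, for anything strictly between it and $R$ would produce a $3$-chain together with a minimal ideal beneath it.

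The step I expect to be the main obstacle is ruling out more than one maximal ideal. If $\m_1\neq\m_2$ were distinct maximal ideals, they would be incomparable, hence nonadjacent, giving $\m_1\cap\m_2=0$; with comaximality this yields $R=\m_1\oplus\m_2$, which (as in the edgeless case) forces both summands to be fields and leaves $\G(R)$ edgeless, a contradiction. Therefore $R$ is local with unique maximal ideal $\m$, and every nontrivial ideal is either minimal or equal to $\m$; the presence of an edge guarantees that $\m$ strictly contains a minimal ideal. Consequently the vertices are precisely $\m$ together with the minimal ideals $\a_i\subsetneq\m$, and since $\m$ is adjacent to every $\a_i$ while distinct $\a_i$ meet in $0$, the graph $\G(R)$ is the star centred at $\m$. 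This completes the argument.
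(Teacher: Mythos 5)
Your proof is correct in substance but takes a genuinely different route from the paper's. The paper argues ring-theoretically: it splits into $R$ non-local (where triangle-freeness forces $\m_1\cap\m_2=0$, giving $R\cong F_1\oplus F_2$ and two isolated vertices) and $R$ local, and in the local case splits again on whether $\m$ is principal, proving $\m^2=0$ in the non-principal case so that $\m$ becomes a vector space over $R/\m$ of dimension at most $2$, and listing the ideals $\m\supseteq\m^2$ in the principal case. Your argument replaces all of this with an order-theoretic observation: triangle-freeness makes adjacency coincide with comparability (an adjacent incomparable pair $\a,\b$ yields the triangle $\{\a,\b,\a\cap\b\}$), hence the poset of nontrivial ideals has no $3$-chains, every non-minimal vertex is a maximal ideal, and once $R$ is known to be local the graph is visibly the star on $\m$ and the minimal ideals. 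This buys uniformity (no principal/non-principal dichotomy, no computation of $\m^2$) and makes the star structure transparent; what it loses is the finer ring-theoretic information the paper extracts along the way (e.g. $\m^2=0$ and $\dim_{R/\m}\m\le 2$), which the paper reuses in neighboring results such as Theorem \ref{pendant}.

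One step needs patching. In ruling out two maximal ideals you write that $R=\m_1\oplus\m_2$ ``as in the edgeless case'' forces both summands to be fields. The edgeless-case argument does not transfer: there you contradicted edgelessness by exhibiting an edge, whereas here you are inside the case where edges exist, so a nontrivial ideal $\a$ of the ring $\m_1$ being adjacent to $\m_1$ is not by itself absurd. The fix is immediate from your own key lemma: if $0\neq\a\subsetneq\m_1$, then $\a\oplus\m_2$ and $\m_1$ intersect in $\a\neq 0$ but are incomparable, contradicting adjacency $=$ comparability (equivalently, $\{\a,\m_1,\a\oplus\m_2\}$ is a triangle). Hence both $\m_i$ are minimal as ideals of $R$, thus fields, so $\G(R)$ is two isolated vertices, contradicting the presence of an edge. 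With that sentence inserted, your proof is complete.
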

\begin{proof}
If $R$ is not a local ring, then there exist two distinct maximal ideals $\m_1$ and $\m_2$ in $R$. Since $\G(R)$ is triangle-free, we should have $\m_1\cap\m_2=0$. Hence $R=\m_1\oplus\m_2$. Let $F_1=R/\m_1$ and $F_2=R/\m_2$. Then $R\cong F_1\oplus F_2$ and $\G(R)$ is the union of two isolated vertices.

Now, suppose that $(R,\m)$ is a local ring. We have two cases for $\m$.

Case 1: $\m$ is not a principal ideal. First we show that $\m^2=0$. If $x\in\m$ and $y\in \m\setminus xR$, then $ xR\cap yR=0$. Thus $xy=0$ so that $(\m\setminus xR)x=0$. On the other hand, if $r\in R$, then $xr=y+(xr- y)$ so that $xrx =0$. Thus $xRx=0$ and consequently $\m x=0$. Hence $\m^2=0$.

Let $F=R/\m$. The same as in the proof of Theorem \ref{pendant}, $(\m,F)$ is a vector space. If $\dim_F\m\geq3$ and $\lbrace x, y, z\rbrace$ is an independent set in $(\m,F)$, then the set of ideals $\{(x),(x,y),(x,y,z)\}$ induces a triangle in $\G(R)$, which is a contradiction. Thus $\dim_F\m\leq2$.

If $\dim_F\m=2$, then every two distinct non-trivial ideals of $R$ different from $\m$ are disjoint. Thus $\G(R)$ is a star graph with $\m$ at the center. If $\dim_F\m=1$, then $\G(R)$ is a single vertex and we are done.

Case 2: $\m=xR$ is a principal ideal. Let $\a$ be a non-zero ideal of $R$. Then $ \a\subseteq\m$. If $y\in\a$, then $y=rx$ for some $r \in R$. If $r$ is a unit, then $x=yr^{-1} \in \a$ and hence $ \a= \m $. If $\a\neq\m$, then $r$ is not unit and so $r=sx$, for some $s\in R$. Thus $y=sx^2$ and subsequently $\a\subseteq\m^2\subseteq\m $. Since $\G(R)$ is triangle free, it follows that $\a= \m^2$. Therefore $\G(R)$ is either a single vertex when $\m=\m^2$ or it is an edge when $\m\neq\m^2$.
\end{proof}

The following corollary is a direct consequence of the preceding two theorems.
\begin{corollary}
Let $R$ be a ring, which is neither a direct sum of two fields nor a direct sum of a field with a local ring $(S,\m)$ such that $\m$ is a field. Then the following conditions are equivalent:
\begin{itemize}
\item[(1)]$\G(R)$ is triangle-free,
\item[(2)]$\G(R)$ has a pendant,
\item[(3)]$\G(R)$ is bipartite.
\item[(4)]$\G(R)$ is star.
\end{itemize}
\end{corollary}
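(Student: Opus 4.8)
The plan is to show that (4) is equivalent to each of (1), (2) and (3) separately, using the two preceding theorems as the only non-trivial inputs; every remaining implication is a purely graph-theoretic triviality. Concretely, a star graph has no cycle at all, so in particular it is triangle-free and bipartite, which yields $(4)\Rightarrow(1)$ and $(4)\Rightarrow(3)$; and a star with at least one edge has each of its leaves as a pendant, which yields $(4)\Rightarrow(2)$. Conversely a bipartite graph contains no odd cycle, hence no triangle, giving $(3)\Rightarrow(1)$. In this way the whole corollary reduces to establishing the two implications $(1)\Rightarrow(4)$ and $(2)\Rightarrow(4)$.

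For $(1)\Rightarrow(4)$ I would invoke Theorem \ref{triangle}: if $\G(R)$ is triangle-free, then it is either a star or a pair of isolated vertices. The second alternative is exactly the case $R\cong F_1\oplus F_2$ with $F_1,F_2$ fields; indeed this is how it arises in the proof of Theorem \ref{triangle}, where $R=\m_1\oplus\m_2$ and $F_i=R/\m_i$. This case is excluded by hypothesis, so $\G(R)$ must be a star, i.e. (4) holds. The implication $(2)\Rightarrow(4)$ then requires no new work whatsoever: under the very hypotheses assumed in the corollary it is precisely the statement of Theorem \ref{pendant}. Combining these with the trivial implications above closes the chain $(1)\Leftrightarrow(2)\Leftrightarrow(3)\Leftrightarrow(4)$.

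The only points demanding care are bookkeeping, not genuine obstacles. First, one must confirm that the hypotheses of the corollary coincide with those of Theorem \ref{pendant}, so that $(2)\Rightarrow(4)$ transfers verbatim, and that the first excluded case (a direct sum of two fields) is exactly what lets us discard the ``two isolated vertices'' outcome of Theorem \ref{triangle}; both match as stated. Second, one should track the degenerate stars: when $\G(R)$ collapses to a single vertex (for instance $R=\mathbb{Z}/p^2\mathbb{Z}$) there is no pendant, so the equivalence involving (2) is to be read for graphs possessing at least one edge, and this is the sole place where the otherwise uniform argument needs a caveat. Apart from this, the corollary is genuinely an immediate consequence of Theorems \ref{pendant} and \ref{triangle}.
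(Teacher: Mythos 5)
Your proposal is correct and takes essentially the same route as the paper, which offers no proof at all and simply declares the corollary a direct consequence of the two preceding theorems; your reduction---trivial graph theory for $(4)\Rightarrow(1),(3),(2)$ and $(3)\Rightarrow(1)$, plus Theorem \ref{triangle} for $(1)\Rightarrow(4)$ (discarding the two-isolated-vertices case via the excluded $F_1\oplus F_2$) and Theorem \ref{pendant} for $(2)\Rightarrow(4)$---is exactly what is intended. Your closing caveat is also well taken: for $R=\mathbb{Z}/p^2\mathbb{Z}$, which the hypotheses do not exclude, $\G(R)$ is a single vertex, so (1), (3) and (4) hold while (2) fails; this degenerate case is a genuine gap in the statement that the paper silently ignores and your reading convention repairs.
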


In what follows, we shall concentrate on cycle structure of intersection graphs and give a characterization of almost all intersection graphs under investigation that do not have an induced cycle of length greater than $3$.
\begin{theorem}
The graph $\G(R)$ is $C_{4}$-free if and only if $R$ has no set of four non-zero independent ideals.
\end{theorem}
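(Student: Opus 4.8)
The plan is to read ``$C_4$-free'' as having no induced (chordless) $4$-cycle, in keeping with the stated goal of characterising graphs without an induced cycle of length $4$, and to prove both implications by passing explicitly between a chordless $4$-cycle and a system of four independent ideals. For the easier ``if'' direction I would argue by contraposition: given four non-zero independent ideals $\a_1,\a_2,\a_3,\a_4$, set $\b_i=\a_i+\a_{i+1}$ (indices mod $4$). Each consecutive pair shares a summand, so $\b_1\cap\b_2\supseteq\a_2\neq0$, and likewise around the cycle, giving the four edges; while $\b_1\cap\b_3=(\a_1\oplus\a_2)\cap(\a_3\oplus\a_4)=0$ and $\b_2\cap\b_4=0$ by directness of the sum, giving the two diagonal non-edges. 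Hence $\b_1,\b_2,\b_3,\b_4$ induce a $C_4$ in $\G(R)$.

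Conversely, suppose $\a_1,\a_2,\a_3,\a_4$ induce a $4$-cycle in the cyclic order $\a_1\a_2\a_3\a_4$, so that $\a_i\cap\a_{i+1}\neq0$ for all $i$ while the diagonals vanish, $\a_1\cap\a_3=0$ and $\a_2\cap\a_4=0$. I would take the edge-intersections $\b_i=\a_i\cap\a_{i+1}$, which are non-zero by the edge conditions. The point is to show they are independent, i.e. that the only way to write $0=b_1+b_2+b_3+b_4$ with $b_i\in\b_i$ is the trivial one. Since $b_1,b_2\in\a_2$ and $b_3,b_4\in\a_4$, rewriting the relation as $b_1+b_2=-(b_3+b_4)$ places both sides in $\a_2\cap\a_4=0$, whence $b_1+b_2=0$ and $b_3+b_4=0$. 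Now $b_1=-b_2$ lies in $\a_1$ and in $\a_3$, so $b_1\in\a_1\cap\a_3=0$; likewise $b_3\in\a_3\cap\a_1=0$. Thus all $b_i=0$, the representation of $0$ is unique, and $\b_1,\b_2,\b_3,\b_4$ are independent; being non-zero and independent they are automatically distinct, forming a set of four as required.

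The main obstacle is exactly this independence step in the converse: the $\b_i$ are only obviously pairwise disjoint, and pairwise disjointness does not force a direct sum (three lines through the origin in a plane already fail). What rescues the argument is the grouping $(b_1+b_2)+(b_3+b_4)$ dictated by the cycle: consecutive intersections share the ``even'' ideals $\a_2,\a_4$, so the first diagonal condition collapses each grouped pair to zero, and the second diagonal condition then separates the two members of each pair. I would finish by checking the routine points that make the first construction an honest induced $C_4$: each $\b_i=\a_i\oplus\a_{i+1}$ is a non-zero proper ideal, since it meets the non-zero ideal $\a_{i+2}$ trivially and so cannot be $R$; and the four are pairwise distinct, because for any two of them one contains a non-zero summand disjoint from the other. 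These verifications are immediate from independence and complete the equivalence.
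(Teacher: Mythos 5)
Your proof is correct and takes essentially the same route as the paper: both directions rest on the identical constructions, $\a_i\oplus\a_{i+1}$ to build the induced $C_4$ from four independent ideals, and the edge-intersections $\b_i=\a_i\cap\a_{i+1}$ to extract four non-zero independent ideals from an induced $4$-cycle. The only difference is organizational: the paper verifies independence term by term, showing $(\a_1\cap\a_2)\cap(\a_2\cap\a_3+\a_3\cap\a_4+\a_4\cap\a_1)=0$ by element-chasing with the diagonal conditions $\a_1\cap\a_3=\a_2\cap\a_4=0$, while you verify the equivalent condition that $0$ has a unique representation $b_1+b_2+b_3+b_4$, using the same two diagonal conditions through your symmetric grouping $(b_1+b_2)+(b_3+b_4)$.
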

\begin{proof}
First suppose that $R$ has an ideal which is a direct sum of four non-zero ideals, namely $\a_1,\a_2,\a_3$ and $\a_4$. Then $\a_1\oplus\a_2$, $\a_2\oplus\a_3$, $\a_3\oplus\a_4 ,\a_4\oplus\a_1$ induces a cycle of length $4$ in $\G(R)$.

Conversely, suppose that $R$ has an induced $4$-cycle with vertices $\a_1,\a_2,\a_3$ and $\a_4$. Then $\a_1\cap\a_3=\a_2\cap\a_4=0$. Since $\a_2\cap\a_3+\a_3\cap\a_4\subseteq\a_3$, we have
\begin{align*}
(\a_1\cap\a_2)\cap(\a_2\cap\a_3+ \a_3\cap\a_4+\a_4\cap\a_1)&\subseteq(\a_1\cap\a_2)\cap(\a_3+(\a_4\cap\a_1))\\
&=(\a_1\cap\a_2)\cap (\a_3\oplus\a_4\cap\a_1).
\end{align*}
If $a+b\in (\a_1\cap\a_2)\cap(\a_3\oplus\a_4\cap\a_1)$, where $a\in\a_3$ and $b\in\a_4\cap\a_1$, then $a\in\a_1$, which implies that $a=0$. Then $b\in\a_2$ and similarly $b=0$. Hence 
\[(\a_1\cap\a_2)\cap(\a_2\cap\a_3+\a_3\cap\a_4+\a_4\cap\a_1)=0.\]
Similar arguments show that $(\a_1\cap\a_2),(\a_2\cap\a_3),(\a_3\cap\a_4)$ and $(\a_4\cap\a_1)$ are non-zero independent ideals and the proof is complete.
\end{proof}

Recall that a ring is reduced if it has no non-zero nilpotent element.
\begin{theorem}
Let $R$ be a reduced ring. Then $\G(R)$ is $C_n$-free ($n\geq5$) if and only if $R$ has no set of $n$ independent of ideals.
\end{theorem}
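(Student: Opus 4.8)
The plan is to prove both directions by contraposition, keeping the formulation in terms of induced cycles exactly as in the $C_4$-free case; reducedness will enter only in the direction that manufactures independent ideals out of a cycle.

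For the implication ``a set of $n$ independent ideals produces a $C_n$ in $\G(R)$'' I would argue as in the $C_4$ case, using nothing about $R$ being reduced. Given non-zero independent ideals $\a_1,\ldots,\a_n$, set $\b_i=\a_i\oplus\a_{i+1}$ with subscripts read modulo $n$. Consecutive vertices are adjacent since $\b_i\cap\b_{i+1}\supseteq\a_{i+1}\neq0$, whereas if $\{i,i+1\}\cap\{j,j+1\}=\emptyset$ the directness of $\a_1+\cdots+\a_n$ yields $\b_i\cap\b_j=(\a_i+\a_{i+1})\cap(\a_j+\a_{j+1})=0$. As $n\geq5$, each $\b_i$ is a non-zero proper ideal and the $\b_i$ are pairwise distinct, so $\b_1,\ldots,\b_n$ induce a cycle of length $n$.

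For the converse I would start from an induced $n$-cycle $\a_1-\a_2-\cdots-\a_n-\a_1$ (subscripts modulo $n$), so that $\a_i\cap\a_{i+1}\neq0$ while $\a_i\cap\a_j=0$ for non-adjacent $\a_i,\a_j$, and put $\b_i=\a_i\cap\a_{i+1}$. These are $n$ non-zero proper ideals, and the entire content is to show they are independent. Here I would invoke the following feature of reduced rings: if ideals $\mathfrak{c}_1,\ldots,\mathfrak{c}_m$ satisfy $\mathfrak{c}_k\mathfrak{c}_l=0$ for all $k\neq l$, then they are independent, because any $x\in\mathfrak{c}_k\cap\sum_{l\neq k}\mathfrak{c}_l$ satisfies $x^2\in\sum_{l\neq k}\mathfrak{c}_k\mathfrak{c}_l=0$ and hence $x=0$. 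Thus it suffices to prove $\b_i\b_j=0$ whenever $i\neq j$. Since $\b_i\subseteq\a_i$, $\b_i\subseteq\a_{i+1}$ (and likewise for $\b_j$), the product $\b_i\b_j$ lies in $\a_p\a_q$ for every $p\in\{i,i+1\}$ and $q\in\{j,j+1\}$, and $\a_p\a_q\subseteq\a_p\cap\a_q=0$ whenever $\a_p,\a_q$ are non-adjacent. When $\a_i,\a_j$ are themselves non-adjacent this already gives $\b_i\b_j\subseteq\a_i\a_j=0$; the only case needing care is $\a_i,\a_j$ adjacent, say $j=i+1$, where $\b_i\b_j\subseteq\a_i\a_{i+2}$ and $\a_i,\a_{i+2}$ are non-adjacent because $n\geq5$. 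Hence $\b_i\b_j=0$ for all $i\neq j$, and the reduced-ring fact makes $\b_1,\ldots,\b_n$ independent, completing the converse.

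The main obstacle is precisely this last bookkeeping: guaranteeing, for each pair $\b_i,\b_j$, a genuinely non-adjacent pair among the four index choices. This is exactly where $n\geq5$ is used, and where the naive argument collapses for $n=4$ (the reason the $C_4$-free case was handled by the separate, reduced-free computation given earlier). Reducedness is needed only in passing from vanishing of pairwise products to directness of the sum; without it, $\b_i\b_j=0$ for all $i\neq j$ would not be enough to force $\b_1,\ldots,\b_n$ to be independent.
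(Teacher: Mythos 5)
Your proof is correct and follows essentially the same route as the paper: independent ideals yield the induced cycle $\a_1\oplus\a_2,\a_2\oplus\a_3,\ldots,\a_n\oplus\a_1$, and conversely the intersections $\b_i=\a_i\cap\a_{i+1}$ of consecutive cycle vertices have pairwise zero products, which by reducedness (squaring an element of $\b_i\cap\sum_{j\neq i}\b_j$) forces them to be independent --- you merely spell out the bookkeeping for $\b_i\b_j=0$ that the paper leaves implicit. One parenthetical aside is inaccurate: the same bookkeeping does go through for $n=4$, since opposite vertices of an induced $C_4$ are also disjoint; the paper treats the $C_4$ case separately only because that theorem is stated and proved for arbitrary rings, without the reducedness hypothesis.
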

\begin{proof}
First suppose $\G(R)$ is $C_n$-free. If $R$ has $n$ non-zero independent ideals $\a_1,\ldots\a_n$, then $\a_1\oplus\a_2,\a_2\oplus\a_3,\ldots,\a_n\oplus\a_1$ induces a cycle of length $n$ in $\G(R)$, which is a contradiction.

Now, suppose that $R$ has no set of $n$ non-zero independent ideals and the ideals $\a_1,\ldots,\a_n$ induce a cycle of length $n$. Let $\b_n=\a_n\cap\a_1$ and $\b_i =\a_i\cap\a_{i+1}$ for all $i=1,\ldots,n-1$. Then for all distinct $1\leq i,j\leq n$, we have $\b_i\b_j= 0$. Let $\b_i^*=\b_1+\cdots+\b_{i-1}+\b_{i+1}+\cdots+\b_n$. Then $\b_i\b_i^*=0$. Thus $(\b_i\cap\b_i^*)^2\subseteq\b_i\b_i^*=0$, for all $i=1,\ldots,n$. Since $R$ is reduced, it follows that  $\b_i\cap\b_i^*=0$, from which it follows that $\{\b_1,\ldots,\b_n\}$ is a set of non-zero independent ideals of $R$, which is a contradiction.
\end{proof}

In the sequel, we give another approaches to induced cycles in intersection graphs. The following lemma is straightforward.
\begin{lemma}
Suppose $\a_1,\ldots,\a_n$ induce a cycle of length $n$ in $\G(R)$. Then there exist $t$ independent ideals $\a_{i_1},\ldots,\a_{i_t}$ such that $2\leq t\leq\lfloor\frac{n}{2}\rfloor$ and $\a_{i_1}\oplus\cdots\oplus\a_{i_t}$ is adjacent to $\a_i$ for all $i=1,\ldots,n$.
\end{lemma}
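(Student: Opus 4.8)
The plan is to exhibit the required subfamily as a \emph{maximal independent} (in the ideal-theoretic sense) subset of the cycle vertices, and then read off independence, the two size bounds, and the adjacency property directly from maximality together with the fact that the cycle is induced. Throughout I will use that, since $\a_1,\ldots,\a_n$ induce a cycle, $\a_i\cap\a_{i+1}\neq0$ for every $i$ (indices mod $n$) while $\a_i\cap\a_j=0$ whenever $\a_i,\a_j$ are non-adjacent on the cycle. Assuming $n\geq4$ (which is forced by the bound $2\leq t\leq\lfloor\frac{n}{2}\rfloor$), the vertices $\a_1$ and $\a_3$ are non-adjacent, so $\a_1\cap\a_3=0$ and $\{\a_1,\a_3\}$ is an independent pair. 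I would then let $T=\{\a_{i_1},\ldots,\a_{i_t}\}$ be a maximal independent subfamily of $\{\a_1,\ldots,\a_n\}$ containing $\{\a_1,\a_3\}$, and set $\b=\a_{i_1}\oplus\cdots\oplus\a_{i_t}$. By construction $t\geq2$ and the chosen ideals are independent, so two of the three conclusions are immediate.

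The key technical ingredient for adjacency is the standard directness criterion: a family is independent precisely when it satisfies the ordered partial-sum condition $\a_{i_j}\cap(\a_{i_1}+\cdots+\a_{i_{j-1}})=0$, and it suffices to verify this in a single ordering. Placing a new vertex $\a_k$ last, this yields the clean equivalence that $T\cup\{\a_k\}$ is independent if and only if $T$ is independent and $\a_k\cap\b=0$. Hence, by maximality of $T$, every $\a_k\notin T$ satisfies $\a_k\cap\b\neq0$, so $\b$ is adjacent to $\a_k$; and for $\a_k\in T$ we have $\b\supseteq\a_k\neq0$, and since $t\geq2$ the directness of the sum forces $\b\supsetneq\a_k$ (any other summand lies in $\b$ but is disjoint from $\a_k$), so $\b$ is adjacent to $\a_k$ as well. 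This gives adjacency of $\b$ to all of $\a_1,\ldots,\a_n$.

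For the upper bound I would observe that independence forces pairwise disjointness: for distinct $\a_{i_j},\a_{i_k}\in T$ one has $\a_{i_j}\cap\a_{i_k}\subseteq\a_{i_j}\cap\sum_{l\neq j}\a_{i_l}=0$. Because the cycle is induced, pairwise-disjoint vertices are pairwise non-adjacent on the cycle, so $T$ is an independent set of the graph $C_n$, whose maximum cardinality is $\lfloor\frac{n}{2}\rfloor$; therefore $t\leq\lfloor\frac{n}{2}\rfloor$, completing all three requirements.

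The step I expect to require the most care is the adjacency argument: specifically pinning down the equivalence ``$T\cup\{\a_k\}$ independent $\iff\a_k\cap\b=0$'' via the ordered directness criterion, and the minor distinctness check that $\b\neq\a_k$ so that one obtains genuine adjacency rather than merely non-zero intersection. Both are routine once the criterion is in hand, and the remaining bookkeeping on the cycle is elementary; I would only flag the degenerate possibility that $\b$ coincides with one of the vertices $\a_k$, which is excluded (or resolved by enlarging $T$) using maximality together with $t\geq2$.
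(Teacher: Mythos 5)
The paper offers no argument to compare against---it simply declares the lemma straightforward---so your proposal must be judged on its own terms, and on those terms its core is correct and is very likely the intended ``straightforward'' argument. The implicit restriction to $n\geq4$ is the right reading (for $n=3$ the conclusion $2\leq t\leq\lfloor n/2\rfloor$ is vacuous, and indeed no two vertices of a triangle are independent); the ordered partial-sum criterion does give the equivalence that $T\cup\{\a_k\}$ is independent if and only if $T$ is independent and $\a_k\cap\b=0$, so maximality of $T$ yields $\a_k\cap\b\neq0$ for all $\a_k\notin T$; the argument for $\a_k\in T$ (that $t\geq2$ forces $\b\supsetneq\a_k$) is fine; and the bound $t\leq\lfloor n/2\rfloor$ follows exactly as you say, since independence forces pairwise zero intersection, which by inducedness of the cycle means $T$ is an independent set of the graph $C_n$.

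Two loose ends deserve attention. First, the distinctness check you flag: your parenthetical fix ``resolved by enlarging $T$'' cannot work, because $T$ is already maximal; but the exclusion does go through, as follows. If $\b=\a_k$ for some $\a_k\notin T$, then every member of $T$ is a non-zero ideal properly contained in $\a_k$ (properly, since $t\geq2$), hence graph-adjacent to $\a_k$, and every cycle vertex outside $T\cup\{\a_k\}$ meets $\b=\a_k$ non-trivially by maximality, hence is also graph-adjacent to $\a_k$; since the cycle is induced, all $n-1$ remaining vertices would then be cycle-neighbours of $\a_k$, forcing $n-1\leq2$, contradicting $n\geq4$. Second---and this is a defect of the lemma as stated rather than of your strategy---nothing guarantees that $\b$ is a \emph{proper} ideal, i.e.\ a vertex of $\G(R)$ at all. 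Take $R=F_1\oplus F_2\oplus F_3\oplus F_4$ with the $F_i$ fields, and $\a_1=F_1\oplus F_2$, $\a_2=F_2\oplus F_3$, $\a_3=F_3\oplus F_4$, $\a_4=F_4\oplus F_1$: these induce a $4$-cycle, the only independent subfamilies of size at least $2$ are $\{\a_1,\a_3\}$ and $\{\a_2,\a_4\}$, and both sums equal $R$. So under the strict reading (adjacency between vertices of $\G(R)$) the lemma is false, and your proof, like the paper's claim, is valid only under the loose reading that ``adjacent'' means ``distinct with non-zero intersection.'' It would strengthen your write-up to state this reading explicitly, or to add the hypothesis $\a_{i_1}\oplus\cdots\oplus\a_{i_t}\neq R$ cannot always be arranged.
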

\begin{theorem}
Suppose $\a_1,\ldots,\a_n$ induce a cycle of length $n$ in $\G(R)$ and the number $t$ introduced in the previous lemma takes it maximum value $\lfloor\frac{n}{2}\rfloor$. Then $R$ has a set of $n$ non-zero independent ideals if $n$ is even and it has a set of $n-1$ non-zero independent ideals if $n$ is odd.
\end{theorem}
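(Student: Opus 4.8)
The plan is to convert the maximal independent family supplied by the previous lemma into the required independent ideals by taking the pairwise intersections of consecutive vertices along the cycle. Write $n=2m$ or $n=2m+1$, so that $\lfloor n/2\rfloor=m$, and let $S=\{\a_{i_1},\ldots,\a_{i_m}\}$ be the $m$ independent ideals provided by the lemma, whose direct sum is adjacent to every $\a_i$. Because $\a_{i_k}\cap\a_{i_l}=0$ for $k\neq l$, no two members of $S$ are adjacent in the cycle, so $S$ is an independent set of cycle-vertices; since $|S|=m=\lfloor n/2\rfloor$ equals the independence number of $C_n$, it is a maximum independent set. Note also that $t\geq2$ forces $\lfloor n/2\rfloor\geq2$, i.e. $n\geq4$, a fact I will use below.

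The ideals I would exhibit are the consecutive intersections along the cycle. For each edge $\{\a_j,\a_{j+1}\}$ of the cycle (indices modulo $n$) set $\b_j=\a_j\cap\a_{j+1}$; these are all non-zero since consecutive vertices are adjacent. I restrict attention to those edges incident to a member of $S$. As $S$ is independent, no edge joins two members of $S$, so each incident edge has a unique endpoint in $S$, and the number of edges incident to $S$ is exactly $\sum_{k}\deg_{C_n}(\a_{i_k})=2m$. This yields $2m$ intersection ideals, two attached to each $\a_{i_k}$ (coming from its two incident edges); and $2m$ equals $n$ when $n$ is even and $n-1$ when $n$ is odd — precisely the dichotomy in the statement. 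The edge-counting alone produces the two cases, so no separate analysis of the location of the omitted edge in the odd case is needed.

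It then remains to prove that these $2m$ ideals are independent. For a fixed $\a_{i_k}$ its two incident edges give $\b=\a_{i_k}\cap\a_u$ and $\b'=\a_{i_k}\cap\a_v$, where $\a_u,\a_v$ are the two cycle-neighbours of $\a_{i_k}$; since $n\geq4$ these neighbours are non-adjacent, so $\b\cap\b'\subseteq\a_u\cap\a_v=0$. For global independence I would fix one chosen intersection $\b\subseteq\a_{i_k}$ and intersect it with the sum of all the others. Splitting that sum into its partner $\b'\subseteq\a_{i_k}$ and the intersections living in the remaining summands, whose sum lies in $\sum_{l\neq k}\a_{i_l}$, any $x\in\b$ in this sum can be written $x=y'+z$ with $y'\in\b'$ and $z\in\sum_{l\neq k}\a_{i_l}$; then $z=x-y'\in\a_{i_k}\cap\sum_{l\neq k}\a_{i_l}=0$ by the independence of $S$, so $x=y'\in\b\cap\b'=0$. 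Hence each chosen intersection meets the sum of the others trivially, the family is independent, and its members are in particular non-zero and pairwise distinct, finishing both parities.

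The only real subtlety I anticipate is organizational: one must keep straight the two intersection ideals attached to each $\a_{i_k}$, because the single place where the hypothesis $n\geq4$ enters is the ``within one summand'' step $\b\cap\b'\subseteq\a_u\cap\a_v=0$. The ``across different summands'' step is then immediate from the direct-sum decomposition carried by $S$, so I do not expect any genuine computational obstacle.
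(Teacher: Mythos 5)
Your proposal is correct and takes essentially the same approach as the paper: both identify the $\lfloor n/2\rfloor$ independent ideals with (up to relabeling) alternate vertices of the cycle and exhibit the consecutive intersections $\a_j\cap\a_{j+1}$ along the edges meeting that set --- all $n$ of them when $n$ is even, and the $n-1$ obtained by omitting the one edge between the two consecutive non-selected vertices when $n$ is odd --- as the required family of non-zero independent ideals. The paper dismisses the independence check as ``a simple verification''; your argument splitting an element as $y'+z$ and using $\a_{i_k}\cap\sum_{l\neq k}\a_{i_l}=0$ together with $\b\cap\b'\subseteq\a_u\cap\a_v=0$ is exactly that verification, written out in full.
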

\begin{proof}
Without loss of generality we may assume that $\a_1,\a_3,\ldots,\a_{2\lfloor\frac{n}2\rfloor-1}$ are independent. A simple verification shows that
\[\{\a_1\cap\a_2,\a_2\cap\a_3,\ldots,\a_{n-1}\cap\a_n,\a_n\cap\a_1\}\]
when $n$ is even,
\[\{\a_1\cap\a_2,\a_2\cap\a_3,\ldots,\a_{2\lfloor\frac{n}2\rfloor-1}\cap\a_{n -1},\a_n\cap\a_1\}\]
when $n$ is odd are sets of non-zero independent ideas of $R$, as required.
\end{proof}
\begin{theorem}
Suppose $\a_1,\ldots,\a_n$ $(n\geq3)$ are independent ideals of $R$. Let $\b_i=\a_{i_1}\oplus\cdots\oplus\a_{i_{n_i}}$, for $i=1,\ldots,n$. Then $\b_1,\ldots,\b_n$ induce a cycle of length $n$ if and only if there exist a permutation $\pi\in S_n$ such that $\b_i=\a_{\pi(i)}\oplus\a_{\pi(i+1)}$.
\end{theorem}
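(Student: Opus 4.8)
The plan is to translate the statement into a purely combinatorial one about the index sets and then analyze the resulting set-intersection graph. Writing $\b_i=\bigoplus_{j\in S_i}\a_j$ for a subset $S_i\subseteq\{1,\ldots,n\}$, the first step is the identity $\b_i\cap\b_k=\bigoplus_{j\in S_i\cap S_k}\a_j$, which follows from the independence of $\a_1,\ldots,\a_n$ by uniqueness of the direct-sum representation. Since each $\a_j$ is non-zero, this shows that $\b_i$ and $\b_k$ are adjacent in $\G(R)$ if and only if $S_i\cap S_k\neq\emptyset$; moreover distinct vertices force distinct $S_i$. Thus the problem reduces to the following: among $n$ distinct non-empty subsets $S_1,\ldots,S_n$ of an $n$-element set, the set-intersection graph is the cycle $\b_1-\b_2-\cdots-\b_n-\b_1$ exactly when $S_i=\{\pi(i),\pi(i+1)\}$ for some $\pi\in S_n$ (indices read cyclically, following the paper's convention that a listed cycle is traversed in the given order).

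The ``if'' direction is short. If $S_i=\{\pi(i),\pi(i+1)\}$, then $\pi(i+1)\in S_i\cap S_{i+1}$ supplies the cycle edges, while for non-consecutive $i,j$ the index pairs $\{i,i+1\}$ and $\{j,j+1\}$ are disjoint, so $S_i\cap S_j=\emptyset$ because $\pi$ is a bijection; hence the induced subgraph is exactly the required $n$-cycle. Conceptually, the $S_i$ are the edges of the Hamiltonian cycle $\pi(1)-\cdots-\pi(n)-\pi(1)$, and one is recovering the fact that the line graph of $C_n$ is $C_n$.

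For the converse I would argue by counting. If the $\b_i$ induce $C_n$ with $n\geq4$, then for each element $j$ the sets containing $j$ are pairwise intersecting, i.e.\ form a clique; since $C_n$ is triangle-free, each $j$ lies in at most two of the $S_i$. Every one of the $n$ edges of $C_n$ is witnessed by a common element of its two endpoint sets, and by the bound $d_j\leq2$ distinct edges are witnessed by distinct elements, so at least $n$ elements lie in exactly two sets. As only $n$ elements are available, \emph{every} element lies in exactly two sets and the edge-to-element correspondence is a bijection. Matching the two edges at each vertex $\b_i$ against the elements of $S_i$ then forces $|S_i|=2$ for all $i$. Finally, since each $S_i$ meets $S_{i-1}$ and $S_{i+1}$ in a single and (by $d_j=2$) distinct element, I would define $\pi(i)$ to be the unique common element of $S_{i-1}$ and $S_i$; a direct check using $d_j=2$ and $n\geq3$ shows $\pi$ is injective, hence a permutation, and $S_i=\{\pi(i),\pi(i+1)\}$ by construction.

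The main obstacle is the case $n=3$, where $C_3$ is itself a triangle, so the clique bound $d_j\leq2$ fails and the counting collapses. Indeed the configuration $\b_1=\a_1$, $\b_2=\a_1\oplus\a_2$, $\b_3=\a_1\oplus\a_3$ induces a triangle yet has $|S_1|=1$, so no permutation realises it; thus $n=3$ genuinely needs either an extra hypothesis or a separate discussion, and I would expect the clean counting argument above to be the intended proof in the range $n\geq4$.
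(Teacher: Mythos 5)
Your argument for $n\geq4$ is correct and is essentially the paper's own proof. The paper, too, runs a counting argument on $\sum_i n_i$: it gets the upper bound $\sum_i n_i\leq 2n$ exactly as your degree bound $d_j\leq 2$ (three $\b_k$'s sharing a summand $\a_j$ would be pairwise adjacent, giving a triangle inside an induced $C_n$ with $n\geq4$), while for the lower bound it argues directly that no $\b_j$ can equal a single $\a_{j_1}$, since by independence adjacency of $\a_{j_1}$ to $\b_{j-1}$ and to $\b_{j+1}$ forces $\a_{j_1}\subseteq\b_{j-1}\cap\b_{j+1}$, making $\b_{j-1}$ and $\b_{j+1}$ adjacent, a contradiction with induced-ness. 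Your route to the lower bound (distinct witnesses for distinct edges, hence two distinct witnesses inside each $S_i$) is a mild variant of the same count, and your explicit construction of $\pi$ as the common index of $S_{i-1}$ and $S_i$, with injectivity from $d_j\leq2$, fills in precisely what the paper dismisses with ``Now the result is straightforward.'' Your reduction via the identity $\b_i\cap\b_k=\bigoplus_{j\in S_i\cap S_k}\a_j$ is also implicit in the paper's steps.

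Where you genuinely depart from the paper is at $n=3$, and there you are right and the paper is wrong. The paper's proof opens with ``If $n=3$ then the result is obvious,'' but your counterexample $\b_1=\a_1$, $\b_2=\a_1\oplus\a_2$, $\b_3=\a_1\oplus\a_3$ is valid: these are three distinct ideals (distinctness follows from independence), pairwise intersecting in $\a_1\neq0$, hence inducing a cycle of length $3$, yet $\b_1$ cannot be written as $\a_{\pi(1)}\oplus\a_{\pi(2)}$, again by independence. So the ``only if'' direction is false as stated for $n=3$; the theorem should either be restricted to $n\geq4$ or carry an extra hypothesis such as $n_i=2$ for all $i$ (under which the three distinct $2$-subsets of $\{1,2,3\}$ necessarily realize the permutation pattern). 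Flagging this instead of pretending to prove it is not a gap in your proposal but a correction to the paper.
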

\begin{proof}
If $n=3$ then the result is obvious. If there exist $\pi \in S_n$ such that $\b_i=\a_{\pi(i)}\oplus\a_{\pi(i +1)}$, for all $i=1,\ldots,n$, then there is nothing to prove. Hence we may assume that $\b_1,\ldots,\b_n$ are vertices of an induced cycle with length $n\geq4$. Then $n_i\geq 2$, for all $i=1,\ldots,n$, otherwise $\b_j=\a_{j_1}$ for some $j$. But then $\a_{j_1}$ is adjacent to $\b_{j-1}$ and $\b_{j+1}$, which implies that $\b_{j-1}$ and $\b_{j+1}$ are adjacent, a contradiction. Hence $2n\leq \sum_{i=1}^{n}n_i$. On the other hand, the number of $\b_j$ containing $\a_i$ is at most two for all $i=1,\ldots,n$, which implies that $\sum_{i=1}^{n}n_i\leq2n$. Therefore $\sum_{i=1}^{n}n_i=2n$ and hence $n_i=2$, for all $i=1,\ldots,n$. Now the result is straightforward. 
\end{proof}

Utilizing the same method used before, we may prove the following result for $n$-claws instead of $n$-cycles.
\begin{theorem}
Let $R$ be a reduced ring. Then the ideals $\a_1,\ldots,\a_n$ of $R$ are independent and $\a_1\oplus\cdots\oplus\a_n$ is a proper ideal of $R$ if and only if there exist an induced $n$-claw in $\G(R)$.	
\end{theorem}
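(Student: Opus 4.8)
The plan is to prove the two implications separately, using the direct sum itself as the hub in one direction and the reduced hypothesis to upgrade pairwise disjointness to genuine independence in the other.

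For the forward direction, suppose $\a_1,\ldots,\a_n$ are independent (non-zero) ideals and that $\mathfrak{c}:=\a_1\oplus\cdots\oplus\a_n$ is a proper ideal of $R$. I would simply exhibit the claw with centre $\mathfrak{c}$ and leaves $\a_1,\ldots,\a_n$. Since each $\a_i$ is non-zero we have $\mathfrak{c}\cap\a_i=\a_i\neq0$, so $\mathfrak{c}$ is adjacent to every $\a_i$; independence gives $\a_i\cap\a_j=0$ for $i\neq j$, so the leaves are pairwise non-adjacent; and for $n\geq2$ the ideal $\mathfrak{c}$ properly contains each $\a_i$, since it also contains some $\a_j\neq0$ with $\a_j\cap\a_i=0$, so the $n+1$ vertices are distinct. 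Thus these ideals induce an $n$-claw. Note that this direction uses neither reducedness nor anything beyond the definition of independence.

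For the converse, suppose $\G(R)$ contains an induced $n$-claw with centre $\mathfrak{c}$ and leaves $\mathfrak{d}_1,\ldots,\mathfrak{d}_n$, so that $\mathfrak{c}\cap\mathfrak{d}_i\neq0$ for all $i$ while $\mathfrak{d}_i\cap\mathfrak{d}_j=0$ for $i\neq j$. I would set $\b_i:=\mathfrak{c}\cap\mathfrak{d}_i$, which is non-zero, and claim the $\b_i$ are the desired ideals. First, $\b_i\subseteq\mathfrak{d}_i$ forces $\b_i\cap\b_j\subseteq\mathfrak{d}_i\cap\mathfrak{d}_j=0$, hence $\b_i\b_j\subseteq\b_i\cap\b_j=0$ for all $i\neq j$. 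Here the reduced hypothesis does the essential work, exactly as in the $C_n$-free theorem: writing $\b_i^{*}:=\sum_{j\neq i}\b_j$ we get $\b_i\b_i^{*}=0$, so $(\b_i\cap\b_i^{*})^2\subseteq\b_i\b_i^{*}=0$, and since $R$ is reduced this yields $\b_i\cap\b_i^{*}=0$. Thus $\b_1,\ldots,\b_n$ are independent. Finally, each $\b_i\subseteq\mathfrak{c}$ gives $\b_1\oplus\cdots\oplus\b_n\subseteq\mathfrak{c}$, and $\mathfrak{c}$ is a proper ideal, so the direct sum is proper, completing the proof.

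I expect the only genuine obstacle to be the step that promotes the pairwise condition $\b_i\cap\b_j=0$ to true independence $\b_i\cap\b_i^{*}=0$: for $n\geq3$ pairwise disjointness is strictly weaker than independence, and closing this gap is precisely where reducedness is indispensable, through the nilpotency argument $(\b_i\cap\b_i^{*})^2=0$. Everything else is bookkeeping --- verifying non-adjacency of leaves, distinctness of the $n+1$ vertices, and the containment $\sum_i\b_i\subseteq\mathfrak{c}$ that keeps the resulting direct sum proper.
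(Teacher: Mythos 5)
Your proof is correct, and in the converse direction it takes a genuinely different (and cleaner) route than the paper. Both arguments use the identical reducedness trick --- $(\mathfrak{x}\cap\mathfrak{x}^*)^2\subseteq\mathfrak{x}\mathfrak{x}^*=0$ forces $\mathfrak{x}\cap\mathfrak{x}^*=0$ --- but you apply it to the ideals $\b_i=\mathfrak{c}\cap\mathfrak{d}_i$, the intersections of the leaves with the center, whereas the paper applies it directly to the leaves $\mathfrak{d}_i$ themselves. The payoff of your choice is that properness of the direct sum comes for free: since every $\b_i$ lies inside the center $\mathfrak{c}$, which is a non-trivial ideal, you get $\b_1\oplus\cdots\oplus\b_n\subseteq\mathfrak{c}\subsetneq R$ immediately. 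The paper, working with the leaves, must instead confront the possibility that $\mathfrak{d}_1\oplus\cdots\oplus\mathfrak{d}_n=R$, and resolves it by a case analysis: if some $\mathfrak{d}_i$ is not a field it is replaced by a non-zero proper subideal (independence being inherited by subideals), while if all $\mathfrak{d}_i$ are fields, then each $\mathfrak{d}_i$, being a minimal ideal meeting $\mathfrak{c}$ non-trivially, is contained in $\mathfrak{c}$, forcing $\mathfrak{c}=R$, a contradiction. What the paper's longer route buys is the slightly stronger conclusion that the leaves themselves (possibly after shrinking one of them) can serve as the independent family; what your route buys is the elimination of that entire case analysis. Your forward direction matches the paper's, with the distinctness and non-adjacency checks that the paper leaves as ``clearly'' spelled out.
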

\begin{proof}
If $\a_1,\ldots,\a_n$ are independent ideals of $R$ such that $\a_1\oplus\cdots\oplus\a_n$ is a proper ideal of $R$, then clearly $\{\a_1,\ldots,\a_n,\a_1\oplus\cdots\oplus\a_n\}$ induces an $n$-claw in $\G(R)$.

Now, suppose that the ideals $\a_1,\ldots,\a_n$ and $\a$ are pendants and the center of an induced $n$-claw, respectively. Let 
\[\a_i^*=\a_1+\cdots+\a_{i-1}+\a_{i +1}+\cdots+\a_n,\]
for all $i=1,\ldots,n$. Then
\[(\a_i\cap\a_i^*)^2\subseteq\a_i\a_i^*=\sum_{j \neq i}\a_i\a_j\subseteq\sum_{j\neq i}\a_i\cap\a_j=0,\]
for all $i=1,\ldots,n$, which implies that $\a_1,\ldots,\a_n$ are independent. If  $R\neq\a_1\oplus\cdots\oplus\a_n$, then we are done. Now, suppose that $R=\a_1\oplus\cdots\oplus\a_n$. If $\a_i$ is not a field for some $1\leq i\leq n$, then by replacing $\a_i$ by one of its non-zero proper ideals, we may assume that $R\neq\a_1\oplus\cdots\oplus\a_n$, as required. Otherwise $\a_1,\ldots,\a_n$ are all fields. But then $\a_i\subseteq\a$, for all $i=1,\ldots,n$, which implies that $\a=R$, a contradiction.
\end{proof}
\section{Hamilton cycles}
The aim of this section is to show that except few cases all intersection graphs are Hamiltonian. Indeed, we shall prove the stronger result that such graphs are pancyclic.

A simple verification shows that if $R=S\oplus F$, where $F$ is a field and $\G(S)$ has a Hamiltonian path, then $\G(R)$ has a Hamiltonian cycle. This fact enables us to prove the following result. In what follows, the set of all ideals of a ring $R$ is denoted by $\I(R)$.
\begin{theorem}\label{hamiltonian}
Let $R$ be an Artin ring. Then $\G(R)$ is Hamiltonian if and only if $R$ is not isomorphic to the following rings:
\begin{itemize}
\item[(1)]$F$ or $E\oplus F$,
\item[(2)]$S$ or $E\oplus S$ such that $(S,F)$ is a local ring,
\item[(3)]$S$ such that $(S,T)$ is a local ring and $(T,F)$ is a local ring,
\end{itemize}
where $E$ and $F$ are fields.
\end{theorem}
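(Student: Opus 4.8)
The plan is to separate the two implications, handling necessity by direct inspection of the finitely-shaped exceptional graphs and sufficiency by an inductive construction built on the decomposition $R=R_1\oplus\cdots\oplus R_n$ into local rings together with the path-to-cycle reduction recorded just before the statement.

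For necessity I would compute $\G(R)$ for each listed ring and observe that it is too small or too sparse to carry a cycle. If $R=F$ then $\G(R)$ is empty, and if $R=E\oplus F$ it is the two isolated vertices $E\oplus 0$ and $0\oplus F$. When $(S,F)$ is local with $F$ a field, $S$ has the unique nontrivial ideal $F$, so $\G(S)$ is a single vertex, while in $E\oplus S$ the ideal $E\oplus 0$ meets only $E\oplus F$ and is therefore a pendant. In case (3) the ideals of $S$ form the chain $0\subset F\subset T\subset S$, so $\G(S)$ is the single edge joining $F$ and $T$. In each case there are fewer than three vertices or a vertex of degree one, so no Hamiltonian cycle exists.

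For sufficiency, let $R=R_1\oplus\cdots\oplus R_n$ be outside the list. The argument splits according to whether $R$ has a field direct summand. If $R=S\oplus F$ with $F$ a field, then by the cited fact it suffices to exhibit a Hamiltonian \emph{path} in $\G(S)$; I would therefore prove, by the same inductive scheme, a companion lemma describing exactly which Artin rings $S$ have a Hamiltonian path in $\G(S)$, and then check that every $S$ coming from a non-exceptional $R=S\oplus F$ satisfies its hypotheses. If $R$ has no field summand, every $R_i$ is a local non-field, so each contributes a nonzero maximal ideal and a nonzero minimal ideal, and I would construct the cycle explicitly, routing it through the large clique formed by the ideals that contain a fixed minimal ideal of the socle. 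The cleanest base case is $n=1$ with \emph{simple} socle: there every nonzero ideal contains the unique minimal ideal $\sigma$, so any two nontrivial ideals already meet in $\sigma$, whence $\G(R)$ is complete and thus Hamiltonian once it has at least three vertices.

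The hard part will be the sufficiency direction at its boundary, and in particular the local rings whose socle is \emph{not} simple. There the global sufficient conditions of Dirac or Ore type are genuinely unavailable, since the graphs near the boundary do contain low-degree and even degree-one vertices --- precisely the pendants and isolated vertices seen in the necessity direction --- so Hamiltonicity cannot be read off from a degree count and an explicit cycle must be produced. Concretely, I expect the delicate points to be (i) forcing the cycle obtained from a Hamiltonian path of $\G(S)$ to close up through the sparsely-joined vertices $0\oplus F$ and $S\oplus 0$, (ii) establishing the companion Hamiltonian-path criterion so that it holds exactly when $R$ leaves the exceptional list, and (iii) disposing of the residual configurations with non-simple socle, where star-like induced subgraphs threaten to obstruct every Hamiltonian cycle; pinning down that these are already accounted for is the crux of the matter.
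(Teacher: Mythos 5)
Your necessity computations are fine (and more careful than the paper's, which dismisses that direction with ``clearly''), and your observation for the local base case is correct: in an Artin local ring every nonzero ideal contains a minimal ideal, so a simple socle forces $\G(R)$ to be complete. But as it stands the proposal defers exactly the three items that carry all the content --- the Hamiltonian-path companion lemma, the explicit routing in the decomposable case, and the local rings with non-simple socle --- so it is a plan rather than a proof. Worse, the item you yourself flag as ``the crux'' cannot be closed, because the statement is false there. Take $R=\mathbb{F}_2[x,y]/(x,y)^2$, or more generally any local ring $(R,\m)$ with $\m^2=0$ and $\dim_{R/\m}\m=2$. Its nontrivial ideals are $\m$ together with the one-dimensional subspaces of $\m$, and any two of the latter meet in $0$; hence $\G(R)$ is the star $K_{1,|R/\m|+1}$ (the claw $K_{1,3}$ when $R/\m=\mathbb{F}_2$), which is not Hamiltonian. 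Yet $R$ appears nowhere in (1)--(3): it is local but not a field, its maximal ideal is not a field, and its ideal lattice is not a chain $0\subset F\subset T\subset S$. These are precisely the rings whose intersection graphs are shown to be stars in Theorem \ref{triangle}, so your hope in item (iii) that such configurations ``are already accounted for'' is exactly what fails; no amount of clique-routing or path-to-cycle surgery can produce a Hamiltonian cycle in a star.

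For what it is worth, the paper's own proof of Theorem \ref{hamiltonian} has the same hole, so your instinct located a genuine defect rather than a gap in your plan alone: Cases 1--3 of that proof handle decompositions $R=R_1\oplus R_2$ into unital rings via explicit grid-shaped Hamiltonian cycles (Figures 1--5) plus exchange moves absorbing the vertices $\a_i\oplus 0$ and $0\oplus\b_j$, and Case 4 then asserts that any remaining non-Hamiltonian ring sits atop a tower of local rings $(S_i,R_i)$ ending in a field. That assertion silently assumes the maximal ideal of a local ring is again a unital local ring or a field, i.e.\ that the ideal lattice is a chain --- which is exactly what breaks when $\m^2=0$ and $\dim_{R/\m}\m\geq 2$. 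So neither your inductive scheme (induction on local factors plus a Hamiltonian-path criterion, which is structurally different from the paper's grid construction and would be a reasonable alternative for the true part of the theorem) nor the paper's argument can prove the statement as written; the exception list must first be enlarged to include at least the rings with star intersection graphs, and only then does either approach have a chance of closing.
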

\begin{proof}
If $R$ is isomorphic to one of the rings in parts (1), (2) or (3), then clearly $\G(R)$ is not Hamiltonian. Now, suppose that $R$ is a ring such that $\G(R)$ is not Hamiltonian. We proceed in some steps:

Case 1: $R=R_1\oplus R_2$ such that $|\I(R_1)|,|\I(R_2)|\geq4$. Let 
\[\I(R_1)=\{0=\a_0,\a_1,\ldots,\a_m=R_1\}\]
and
\[\I(R_2)=\{0=\b_0,\b_1,\ldots,\b_n=R_2\}.\]
Clearly an arbitrary ideal of $R$ can be expressed as $\a_i\oplus\b_j$ for some $1\leq i\leq m$ and $1\leq j\leq n$. Consider an $(m\times n)$-grid and put $\a_i\oplus\b_j$ on the $(i,j)$-th coordinate. By Figures 1, 2 and 3, the subgraph induced by ideals $\a_i\oplus\b_j$ in which $\a_i,\b_j\neq0$ is Hamiltonian with a Hamiltonian cycle in which there exists at least one edge on every row as well as one edge on every column. If $\{\a_i\oplus\b_j,\a_{i+1}\oplus\b_j\}$ is an edge such that $i,j>0$, then by removing this edge and adding two edges $\{\a_i\oplus\b_j,\b_j\}$ and $\{\b_j,\a_{i+1}\oplus\b_j\}$ we reach to a new cycle including the vertex $\b_j$. Similarly, we may enlarge the resulting cycle in which the new cyclic contains an arbitrary $\a_i\neq0$. Continuing this way, we reach to a Hamiltonian cycle for $\G(R)$, a contradiction.

Case 2: $R=R_1\oplus R_2$ such that $\I(R_1)\geq3$ and $|\I(R_2)|=3$. The same as in case 1, we may present ideals of $R$ on the grids as it is shown in Figures 4 and 5, which gives rise to a Hamiltonian cycle for $\G(R)$. Hence $\G(R)$ is Hamiltonian, which is a contradiction.

Case 3: $R=S\oplus F$, where $F$ is a field. If $S=S_1\oplus S_2$, where either $S_1$ or $S_2$, say $S_1$ is not a field, then $R=S_1\oplus(S_2\oplus F)$ and by case 1, $\G(R)$ is Hamiltonian. Now, suppose that $S_1$ and $S_2$ are both fields. Then 
\[S_1\sim S_1\oplus S_2\sim S_2\sim S_2\oplus F\sim F\sim S_1\oplus F\sim S_1\]
is a Hamiltonian cycle for $\G(R)$. Hence $\G(R)$ is Hamiltonian, a contradiction.

Case 4: If $R$ is a field or it is a direct sum of two fields, then we are done. If not, by cases 1, 2 and 3, there exists a sequence $\{(S_i,R_i)\}_{i=1}^n$ of local rings and a sequence $\{F_i\}_{i=1}^n$ of fields such that $R=R_0=S_1$ or $S_1\oplus F_1$ and $R_i=S_{i+1}$ or $S_{i+1}\oplus F_{i+1}$ for all $1\leq i<n$. Moreover, $R_n$ is a field. If $n=1$, then either $\G(R)$ is a single vertex or it is a path of length three. If $n=2$, then $R=S_1, R_1=S_2$ and $\G(R)$ is an edge. If $n\geq3$, then since $\G(R_{n-2})$ is a path, $\G(R_{n-3})$ and hence $\G(R)$ is Hamiltonian, which is a contradiction. The proof is complete.
\end{proof}
\begin{center}
\begin{tikzpicture}[scale=0.6,rotate=90]
\draw [dotted] (0,0) grid (4,4);
\draw [dotted] (5,0) grid (9,4);
\draw [dotted] (0,5) grid (4,9);
\draw [dotted] (5,5) grid (9,9);

\draw [color=white] (0,8)--(0,9)--(1,9);
\draw [color=white] (8,0)--(9,0)--(9,1);

\draw [thick] (3,4)--(3,0)--(1,0)--(1,1)--(2,1)--(2,2)--(1,2)--(1,3)--(2,3)--(2,4)--(1,4);
\draw [thick] (2,5)--(1,5)--(1,6)--(2,6)--(2,7)--(1,7)--(1,8)--(2,8);
\draw [thick] (3,5)--(3,8)--(4,8)--(4,5);
\draw [thick] (4,4)--(4,0);

\draw [thick] (5,0)--(5,4);
\draw [thick] (6,4)--(6,0)--(7,0)--(7,4);
\draw [thick] (8,2)--(8,4);
\draw [thick] (8,0)--(8,1)--(9,1)--(9,4);

\draw [thick] (5,5)--(5,8)--(6,8)--(6,5);
\draw [thick] (7,5)--(7,8)--(8,8)--(8,5);
\draw [thick] (9,5)--(9,8);

\draw [thick] (8,0) to [out=77, in=-77] (8,2);
\draw [thick] (2,8) to [out=13, in=167] (9,8);

\draw [loosely dotted,thick] (4.2,0.5)--(4.8,0.5);
\draw [loosely dotted,thick] (4.2,1.5)--(4.8,1.5);
\draw [loosely dotted,thick] (4.2,2.5)--(4.8,2.5);
\draw [loosely dotted,thick] (4.2,3.5)--(4.8,3.5);

\draw [loosely dotted,thick] (4.2,5.5)--(4.8,5.5);
\draw [loosely dotted,thick] (4.2,6.5)--(4.8,6.5);
\draw [loosely dotted,thick] (4.2,7.5)--(4.8,7.5);
\draw [loosely dotted,thick] (4.2,8.5)--(4.8,8.5);

\draw [loosely dotted,thick] (0.5,4.2)--(0.5,4.8);
\draw [loosely dotted,thick] (1.5,4.2)--(1.5,4.8);
\draw [loosely dotted,thick] (2.5,4.2)--(2.5,4.8);
\draw [loosely dotted,thick] (3.5,4.2)--(3.5,4.8);

\draw [loosely dotted,thick] (5.5,4.2)--(5.5,4.8);
\draw [loosely dotted,thick] (6.5,4.2)--(6.5,4.8);
\draw [loosely dotted,thick] (7.5,4.2)--(7.5,4.8);
\draw [loosely dotted,thick] (8.5,4.2)--(8.5,4.8);

\draw [loosely dotted,thick] (4.2,4.2)--(4.82,4.82);
\draw [loosely dotted,thick] (4.82,4.2)--(4.2,4.82);
\end{tikzpicture}\\
Figure 1. $(|\I(R_1)|,|\I(R_2)|)=(\mbox{odd}>3,\mbox{even}>3)$
\end{center}
\begin{center}
\begin{tikzpicture}[scale=0.6,rotate=90]
\draw [dotted] (0,0) grid (4,4);
\draw [dotted] (5,0) grid (9,4);
\draw [dotted] (0,5) grid (4,9);
\draw [dotted] (5,5) grid (9,9);

\draw [color=white] (0,8)--(0,9)--(1,9);
\draw [color=white] (8,0)--(9,0)--(9,1);

\draw [thick] (4,0)--(4,4);
\draw [thick](3,4)--(3,0)--(1,0)--(1,1)--(2,1)--(2,2)--(1,2)--(1,3)--(2,3)--(2,4)--(1,4);

\draw [thick] (2,8)--(1,8)--(1,7)--(2,7)--(2,6)--(1,6)--(1,5)--(2,5);
\draw [thick] (3,5)--(3,8)--(4,8)--(4,5);

\draw [thick] (5,4)--(5,0)--(6,0)--(6,4);
\draw [thick] (7,4)--(7,0)--(8,0);
\draw [thick] (8,4)--(8,1)--(9,1)--(9,4);

\draw [thick] (5,8)--(5,5);
\draw [thick] (6,5)--(6,8)--(7,8)--(7,5);
\draw [thick] (8,8)--(8,5);
\draw [thick] (9,8)--(9,5);

\draw [thick] (2,8) to [out=13, in=167] (9,8);
\draw [thick] (2,8) to [out=13, in=167] (9,8);
\draw [thick] (8,0) to [out=77, in=-77] (8,8);

\draw [loosely dotted,thick] (4.2,0.5)--(4.8,0.5);
\draw [loosely dotted,thick] (4.2,1.5)--(4.8,1.5);
\draw [loosely dotted,thick] (4.2,2.5)--(4.8,2.5);
\draw [loosely dotted,thick] (4.2,3.5)--(4.8,3.5);

\draw [loosely dotted,thick] (4.2,5.5)--(4.8,5.5);
\draw [loosely dotted,thick] (4.2,6.5)--(4.8,6.5);
\draw [loosely dotted,thick] (4.2,7.5)--(4.8,7.5);
\draw [loosely dotted,thick] (4.2,8.5)--(4.8,8.5);

\draw [loosely dotted,thick] (0.5,4.2)--(0.5,4.8);
\draw [loosely dotted,thick] (1.5,4.2)--(1.5,4.8);
\draw [loosely dotted,thick] (2.5,4.2)--(2.5,4.8);
\draw [loosely dotted,thick] (3.5,4.2)--(3.5,4.8);

\draw [loosely dotted,thick] (5.5,4.2)--(5.5,4.8);
\draw [loosely dotted,thick] (6.5,4.2)--(6.5,4.8);
\draw [loosely dotted,thick] (7.5,4.2)--(7.5,4.8);
\draw [loosely dotted,thick] (8.5,4.2)--(8.5,4.8);

\draw [loosely dotted,thick] (4.2,4.2)--(4.82,4.82);
\draw [loosely dotted,thick] (4.82,4.2)--(4.2,4.82);
\end{tikzpicture}\\
Figure 2. $(|\I(R_1)|,|\I(R_2)|)=(\mbox{odd}>3,\mbox{odd}>3)$
\end{center}
\begin{center}
\begin{tikzpicture}[scale=0.6,rotate=90]
\draw [dotted] (0,0) grid (4,4);
\draw [dotted] (5,0) grid (9,4);
\draw [dotted] (0,5) grid (4,9);
\draw [dotted] (5,5) grid (9,9);

\draw [color=white] (0,8)--(0,9)--(1,9);
\draw [color=white] (8,0)--(9,0)--(9,1);

\draw [thick] (1,0)--(2,0)--(2,1)--(1,1)--(1,2)--(2,2)--(2,3)--(1,3)--(1,4)--(2,4);
\draw [thick] (2,5)--(1,5)--(1,6)--(2,6)--(2,7)--(1,7)--(1,8)--(2,8);
\draw [thick] (3,5)--(3,8)--(4,8)--(4,5);
\draw [thick] (3,4)--(3,0);
\draw [thick] (4,4)--(4,0);

\draw [thick] (5,0)--(5,4);
\draw [thick] (6,4)--(6,0)--(7,0)--(7,4);
\draw [thick] (8,2)--(8,4);
\draw [thick] (8,0)--(8,1)--(9,1)--(9,4);

\draw [thick] (5,5)--(5,8)--(6,8)--(6,5);
\draw [thick] (7,5)--(7,8)--(8,8)--(8,5);
\draw [thick] (9,5)--(9,8);

\draw [thick] (8,0) to [out=77, in=-77] (8,2);
\draw [thick] (2,8) to [out=13, in=167] (9,8);
\draw [thick] (1,0) to [out=13, in=167] (3,0);

\draw [loosely dotted,thick] (4.2,0.5)--(4.8,0.5);
\draw [loosely dotted,thick] (4.2,1.5)--(4.8,1.5);
\draw [loosely dotted,thick] (4.2,2.5)--(4.8,2.5);
\draw [loosely dotted,thick] (4.2,3.5)--(4.8,3.5);

\draw [loosely dotted,thick] (4.2,5.5)--(4.8,5.5);
\draw [loosely dotted,thick] (4.2,6.5)--(4.8,6.5);
\draw [loosely dotted,thick] (4.2,7.5)--(4.8,7.5);
\draw [loosely dotted,thick] (4.2,8.5)--(4.8,8.5);

\draw [loosely dotted,thick] (0.5,4.2)--(0.5,4.8);
\draw [loosely dotted,thick] (1.5,4.2)--(1.5,4.8);
\draw [loosely dotted,thick] (2.5,4.2)--(2.5,4.8);
\draw [loosely dotted,thick] (3.5,4.2)--(3.5,4.8);

\draw [loosely dotted,thick] (5.5,4.2)--(5.5,4.8);
\draw [loosely dotted,thick] (6.5,4.2)--(6.5,4.8);
\draw [loosely dotted,thick] (7.5,4.2)--(7.5,4.8);
\draw [loosely dotted,thick] (8.5,4.2)--(8.5,4.8);

\draw [loosely dotted,thick] (4.2,4.2)--(4.82,4.82);
\draw [loosely dotted,thick] (4.82,4.2)--(4.2,4.82);
\end{tikzpicture}\\
Figure 3. $(|\I(R_1)|,|\I(R_2)|)=(\mbox{even}>3,\mbox{even}>3)$
\end{center}
\begin{center}
\begin{tikzpicture}[scale=0.6]

\draw [dotted] (0,0) grid (4,2);
\draw [dotted] (5,0) grid (9,2);
\draw [color=white] (0,1)--(0,0)--(1,0);
\draw [color=white] (8,2)--(9,2)--(9,1);

\draw [thick] (0,1)--(0,2)--(1,2)--(1,1)--(2,1)--(2,2)--(3,2)--(3,1)--(4,1)--(4,2);
\draw [thick] (5,2)--(5,1)--(6,1)--(6,2)--(7,2)--(7,1)--(8,1)--(8,2);
\draw [thick] (8,0)--(9,0)--(9,1);

\draw [thick] (8,2) to [out=-60, in=60] (8,0);
\draw [thick] (0,1) to [out=-13, in=-167] (9,1);

\draw [loosely dotted,thick] (4.2,.5)--(4.82,.5);
\draw [loosely dotted,thick] (4.2,1.5)--(4.82,1.5);

\end{tikzpicture}\\
Figure 4. $(|\I(R_1)|,|\I(R_2)|)=(3,\mbox{even}\geq3)$
\end{center}
\begin{center}
\begin{tikzpicture}[scale=0.6]

\draw [dotted] (0,0) grid (4,2);
\draw [dotted] (5,0) grid (9,2);
\draw [color=white] (0,1)--(0,0)--(1,0);
\draw [color=white] (8,2)--(9,2)--(9,1);

\draw [thick] (0,1)--(0,2)--(1,2)--(1,1)--(2,1)--(2,2)--(3,2)--(3,1)--(4,1)--(4,2);
\draw [thick] (5,1)--(5,2)--(6,2)--(6,1)--(7,1)--(7,2)--(8,2)--(8,0);
\draw [thick] (8,0)--(9,0)--(9,1);

\draw [thick] (0,1) to [out=-13, in=-167] (9,1);
\draw [thick , color=white] (0,0)--(1,0);

\draw [loosely dotted,thick] (4.2,.5)--(4.82,.5);
\draw [loosely dotted,thick] (4.2,1.5)--(4.82,1.5);
\end{tikzpicture}\\
Figure 5. $(|\I(R_1)|,|\I(R_2)|)=(3,\mbox{odd}\geq3)$
\end{center}
\begin{theorem}\label{pancyclic}
Let $R$ be an Artin ring. Then $\G(R)$ is Hamiltonian if and only if it is pancyclic.
\end{theorem}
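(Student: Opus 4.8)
The plan is to dispatch the forward implication in one line and to reduce the converse to a single clean mechanism plus one exceptional family. First, if $\G(R)$ is pancyclic then, writing $N$ for the number of vertices of $\G(R)$, it contains in particular a cycle of length $N$, which is a Hamiltonian cycle; so $\G(R)$ is Hamiltonian. All the work is in the converse.

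For the converse the engine I would use is the following ``arc-cutting'' observation. Suppose $\G(R)$ has a Hamiltonian cycle and a \emph{universal} vertex $u$, that is, a vertex adjacent to every other vertex. Rotating the Hamiltonian cycle so that it reads $u=w_1,w_2,\ldots,w_N,w_1$, for each $k$ with $3\le k\le N$ the path $w_1w_2\cdots w_k$ together with the edge $w_kw_1$ (which exists because $u$ is universal) is a cycle of length $k$. Hence a Hamiltonian graph with a universal vertex is automatically pancyclic, and the whole problem reduces to producing a universal vertex.

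Next I would locate such a vertex using the structure theorem: write $R=R_1\oplus\cdots\oplus R_t$ with each $(R_i,\m_i)$ local. If some factor, say $R_1$, is not a field (so $\m_1\neq0$), then I claim that $u:=\m_1\oplus R_2\oplus\cdots\oplus R_t$ (read as $\m_1$ when $t=1$) is universal: it is a proper non-zero ideal, and for any vertex $\a=\a_1\oplus\cdots\oplus\a_t$ the intersection $u\cap\a$ is non-zero, since either some $\a_j\neq0$ with $j\ge2$, or else $\a=\a_1\oplus0\oplus\cdots\oplus0$ with $0\neq\a_1\subseteq\m_1$ or $\a_1=R_1$, and in every case a component survives. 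By the previous paragraph $\G(R)$ is pancyclic in this case, which covers every Hamiltonian $R$ that is not a direct sum of fields.

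The remaining Hamiltonian case is $R=F_1\oplus\cdots\oplus F_r$ with all factors fields; here $r\ge3$ (for $r\le2$ the graph is empty or two isolated vertices, hence not Hamiltonian), and this is where the argument genuinely differs, since one checks that there is no universal vertex. Identifying the vertices with the non-empty proper subsets of $\{1,\ldots,r\}$, adjacency being non-empty intersection, I would exploit two features: the ``almost universal'' vertices $C_i=\{1,\ldots,r\}\setminus\{i\}$, each adjacent to every vertex except the singleton $\{i\}$, and the clique $K$ consisting of all subsets containing a fixed coordinate. Running arc-cutting forwards and backwards from $C_r$ along a fixed Hamiltonian cycle then produces cycles of every length except, in the worst case, the single length $N/2+1=|K|+1$; that one length I would realise directly by taking a Hamiltonian cycle of the clique $K$ and splicing in one further vertex between two consecutive members of $K$ that share a coordinate with it. The main obstacle is precisely this last family: verifying that the near-universal arc-cutting misses at most this one length and then exhibiting that length by hand, whereas all the non-field cases are handled uniformly and effortlessly by the universal-vertex mechanism.
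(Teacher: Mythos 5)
Your proposal is correct, and it takes a genuinely different route from the paper's. The paper argues by minimal counterexample: when $R$ is neither local nor the direct sum of a local ring and a field, it modifies the explicit grid-based Hamiltonian cycles built in cases 1 and 2 of Theorem \ref{hamiltonian} (contracting horizontal or vertical paths of length two into single edges) to realize every cycle length $\geq 4$, citing Theorem \ref{triangle} for the triangle, and it disposes of the remaining local and local-plus-field cases by descending to $\G(\m)$ and invoking minimality against the exceptional list of Theorem \ref{hamiltonian}. You instead isolate a clean graph-theoretic mechanism --- a Hamiltonian graph with a universal vertex is pancyclic --- and exhibit the universal vertex $\m_1\oplus R_2\oplus\cdots\oplus R_t$ whenever some local factor $R_1$ is not a field; both claims check out (every ideal of $R$ splits along the factors, and some component always survives the intersection with $u$). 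Notably, this inverts the location of the difficulty: local rings, the delicate case in the paper (where the descent to $\G(\m)$ is in fact questionable, since $\m$ need not be a ring with identity), are immediate for you because $\m$ itself is universal, whereas your hard case, $R$ a direct sum of $r\geq 3$ fields, is exactly the situation the paper's grids handle directly. Your treatment of that case is also sound: rooting the Hamiltonian cycle at the almost-universal vertex $C_r$ with $\{r\}$ at position $m$, forward arcs give every length except $m$ and backward arcs every length except $N-m+2$, so a length can be missed only when $m=N-m+2$, namely the single length $N/2+1=2^{r-1}$; and splicing $v=\{2,\ldots,r\}$ into a Hamiltonian cycle of the clique $K$ of proper subsets containing $1$ supplies precisely that length, since $v$ meets every member of $K$ except $\{1\}$ and $|K|=2^{r-1}-1\geq 3$ once $r\geq 3$. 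What your route buys is independence from the particular constructions of Theorem \ref{hamiltonian} --- you use only the existence of some Hamiltonian cycle --- together with the elimination of the induction and minimal-counterexample apparatus; what the paper's route buys is economy, recycling the cycles already built for the Hamiltonicity theorem at the cost of resting on those figures and on the shakier $\G(\m)$ descent.
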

\begin{proof}
If $\G(R)$ is pancyclic, then clearly $\G(R)$ is Hamiltonian. Now, we show that the converse is also true. Suppose on the contrary that there is an Artin ring $R$ such that $\G(R)$ is a non-pancyclic Hamiltonian graph and that $R$ is minimal with this property. If $R$ is neither a local ring nor a direct sum of a local ring with a field, then by applying the following transformations on the Hamiltonian cycles constructed in cases 1 and 2 of Theorem \ref{hamiltonian}, along with replacing horizontal or vertical paths of length two to a path of length one, by joining its end vertices, we would reach to cycles with possible arbitrary length $\geq4$. 
\begin{center}
\begin{tikzpicture}[scale=0.6]
\draw [dotted] (0,0) grid (1,1);
\draw [dotted] (3,0) grid (4,1);

\draw [dotted] (7,0) grid (8,1);
\draw [dotted] (10,0) grid (11,1);

\draw [dotted] (0,2) grid (1,3);
\draw [dotted] (3,2) grid (4,3);

\draw [dotted] (7,2) grid (8,3);
\draw [dotted] (10,2) grid (11,3);

\draw [thick] (0,1)--(0,0)--(1,0)--(1,1);
\draw [->] (1.5,0.5)--(2.5,0.5);
\draw [thick] (3,1)--(4,1);

\draw [thick] (7,1)--(8,1)--(8,0)--(7,0);
\draw [->] (8.5,0.5)--(9.5,0.5);
\draw [thick] (10,0)--(10,1);

\draw [thick] (1,2)--(0,2)--(0,3)--(1,3);
\draw [->] (1.5,2.5)--(2.5,2.5);
\draw [thick] (4,2)--(4,3);

\draw [thick] (7,2)--(7,3)--(8,3)--(8,2);
\draw [->] (8.5,2.5)--(9.5,2.5);
\draw [thick] (10,2)--(11,2);
\end{tikzpicture}
\end{center}
On the other hand, by Theorem \ref{triangle}, the graphs under consideration contain triangles, which implies that $\G(R)$ is pancyclic, a contradiction. Hence either $R=S$ or $R=S\times F$, where $(S,\m)$ is a local ring and $F$ is a field. If $\G(\m)$ is Hamiltonian, then either $\G(\m)$ is pancyclic, which implies that $\G(R)$ is pancyclic too, contradicting the hypothesis, or $\G(\m)$ is not pancyclic which contradicts the minimality of $R$. Thus $\G(\m)$ is not Hamiltonian and $\m$ is isomorphic to one of the five rings given in Theorem \ref{hamiltonian}. Now, a simple verification shows that in each case either $\G(\m)$ is not Hamiltonian or it is pancyclic, which is a contradiction. The proof is complete.
\end{proof}

\end{document}